\newcommand{\bea}{\begin{eqnarray}}
\newcommand{\eea}{\end{eqnarray}}
\newcommand{\beann}{\begin{eqnarray*}}
\newcommand{\eeann}{\end{eqnarray*}}
\newcommand{\n}{\noindent}
\numberwithin{equation}{section}
\newtheorem{theorem}{Theorem}[section]
\newtheorem{conjecture}[theorem]{Conjecture}
\newtheorem{lemma}[theorem]{Lemma}
\newtheorem{rem}[theorem]{Remark}
\title{Non-recursive Counts of Graphs on Surfaces}
\author
{Nicholas Ercolani$^1$}
\thanks{$^1$ University of Arizona, Department of Mathematics
  (ercolani@math.arizona.edu)}
\author{Joceline Lega$^2$}
\thanks{$^2$ University of Arizona, Department of Mathematics (lega@math.arizona.edu, \url{www.math.arizona.edu/\string~lega/}),}
\author{Brandon Tippings$^3$}
\thanks{$^3$ University of Arizona, Department of Mathematics ({tippings@arizona.edu}).}
\begin{document}
\maketitle

\begin{abstract}
   The problem of map enumeration concerns counting connected spatial graphs, with a specified number $j$ of vertices, that can be embedded in a compact surface of genus $g$ in such a way that its complement yields a cellular decomposition of the surface. As such this problem lies at the cross-roads of combinatorial studies in low dimensional topology and graph theory. The determination of explicit formulae for map counts, in terms of closed classical combinatorial functions of $g$ and $j$ as opposed to a recursive prescription, has been a long-standing problem with explicit results known only for very low values of $g$. In this paper we derive closed-form expressions for counts of maps with an arbitrary number of even-valent vertices, embedded in surfaces of arbitrary genus. In particular, we exhibit a number of higher genus examples for 4-valent maps that have not appeared prior in the literature.\\
   
   \n\textbf{Keywords:} map enumeration, analytical combinatorics, hypergeometric functions, transfer matrices\\
   
   \n\textbf{Mathematics Subject Classifications:} 05A15, 05A10, 33C05, 33C20, 39A60, 57K20
\end{abstract}


\section{Introduction}

This article concerns {\it map enumeration}, a topic that brings together the classical subjects of graphical enumeration and low-dimensional combinatorial topology. This subject has a long history going back to the work of Tutte and his school in the 60's. We refer to \cite{bib:tu} for a contemporaneous survey of that early work. Tutte's original interest concerned {\it rooted planar maps}, which correspond to unlabeled connected graphs embedded in the sphere (or equivalently the plane) that are rooted. In this context, rooted means that a vertex of the graph, together with an edge adjacent to it and a side of that edge, are distinguished. The definition of maps for higher genus is more involved (see following paragraph) and enumerations in these cases are much more challenging to achieve. Some early results for low genus surfaces using an extension of the idea of rooted maps were carried out by Brown \cite{bib:brown} and Arques \cite{bib:arques}. Tutte's recursive approach was extended by Bender and Canfeld \cite{bib:bc86}, who initiated the study of asymptotic (in vertex number) enumerations for general surfaces. A refined version of their recursion was subsequently obtained by Eynard \cite{bib:ebook}. Over the years, a number of related but alternative approaches to the enumeration problem have been developed. These too are recursive and combinatorial in nature, as described in \cite{bib:cms}, \cite{bib:bg12} and the references therein. The present study establishes non-recursive map counts, through an approach grounded in analytical statistical physics, as described in the remainder of this introduction. Our results have a number of interesting current and potential applications in their own right. In particular, the asymptotic enumerations derived in \cite{bib:bc86} can be directly obtained from the exact closed form expressions \eqref{eq:zg_parfrac} and \eqref{eq:eg_parfrac} used in this paper (see section \ref{sec:mc2-7}).

We define a  {\it map} to be a connected, labeled graph embedded injectively into a compact, oriented and connected topological surface so that the complement of the graph in the surface is a disjoint union of cells (sets homeomorphic to open discs). {\it Labeling} means that vertices as well as the half-edges (referred to as darts) are labeled counter-clockwise around each vertex. This sets up a correspondence between {\it matchings}, pairwise, of dart arrangements and equivalence classes of maps. Due to surface symmetries, this correspondence is not quite 1:1. Indeed, two labeled maps are said to be equivalent if there is an orientation-preserving homeomorphism of the surface to itself that leaves the graph unchanged. For fixed graph size (i.e. for a fixed number of vertices), the number of non-equivalent dart matchings (for a surface of fixed genus) is finite. This number is what we refer to as a {\it map count}. Quotienting these counts by the cardinality of all possible labelings, which for a map of size $j$ is $j!\, (2\nu)^j$, would then seem to yield a more geometric count of maps under a coarser equivalence relation that does not involve labelings. However due to the possibility of a surface having symmetries, our map counts will not in general be divisible by the afore-mentioned labeling cardinality.
 This is in complete analogy with descriptions of moduli spaces of conformal structures on Riemann surfaces \cite{bib:nash}, where symmetries lead to orbifold singularities on those spaces. We refer to \cite{bib:em, bib:ew} and references therein for further details.
The top panel of Figure \ref{fig:torus} provides an example of the correspondence between dart matchings and maps. This figure also illustrates the connection between surface topology and graph theory that map enumeration provides: one sees that the dual map associated to the graph, viewed as a 4-valent dart matching, yields a surface tessellation, or {\it tiling}, by topological quadrilaterals.

\begin{figure}
\centering
\includegraphics[width=.7 \linewidth]{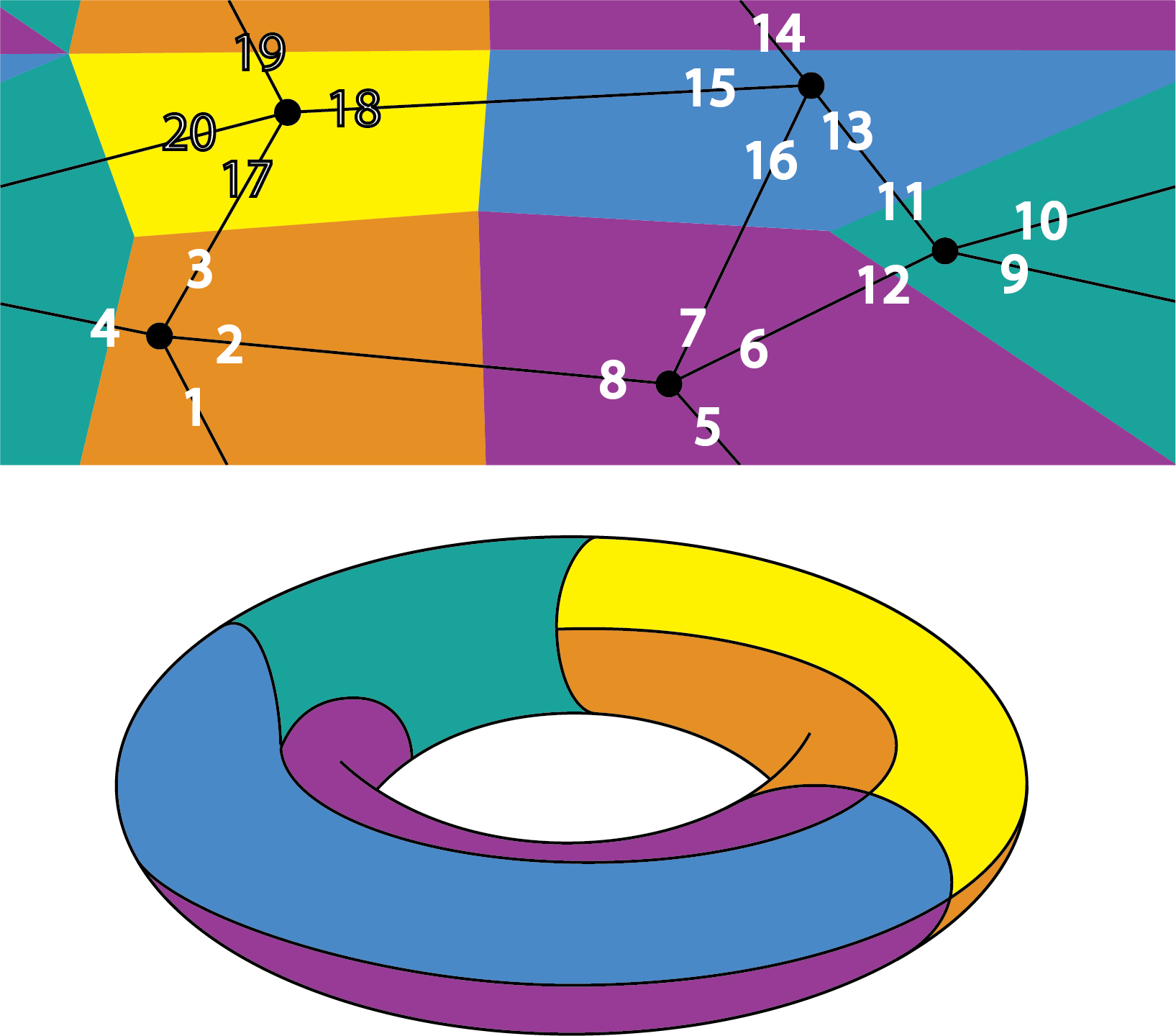}
\caption{Top: 4-valent map with 5 vertices on a torus. The numbers are dart labels and the colors represent the quadrilateral tiling induced by the dual graph. Bottom: Illustration of the resulting tiling of the torus with quadrilaterals.}
\label{fig:torus}
\end{figure}

In this paper we will restrict attention to two related cases: (a) maps whose vertices are all of degree $2 \nu$ (the regular $2 \nu$-valent case) whose count will be denoted by ${\mathcal N}_{2\nu,e}(g,j)$, and
(b) maps with $j$ vertices of degree $2 \nu$, and two additional vertices of degree 1, with count denoted by 
${\mathcal N}_{2\nu,z}(g,j)$. 
By definition the additional two vertices in the latter case each have a unique adjacent edge referred to as a {\it leg}. For case (b) the symmetries alluded to earlier are absent \cite{bib:elt22b} and so the map counts are each divisible by  $j!\, (2\nu)^j$. Hence, in this case the quotient counts the number of coarser map equivalence classes without labeling.

As previously mentioned, the method we employ to calculate map counts stems from statistical physics and more precisely from application of the Wick calculus to perturbation expansions of the random matrix partition function (for hermitian ensembles) and its correlation functions (referred to as genus expansions). This gives rise to a hierarchy of equations that recover  generating functions for the map counts as coefficients in the genus expansions \cite{bib:em}. These functions have the general form
\begin{equation*}
    \sum_{j = 0}^\infty \dfrac{{\mathcal N}(g,j)}{j!} t^j .
\end{equation*}
In what follows we denote the generating functions for counts of maps of genus $g$ ($g$-maps) in case (a) by $e_g$, and in case (b) by $z_g$.
The two are intimately related. In particular, the $e_g$ may be determined from the $z_g$ by solving a forced Cauchy-Euler equation \cite{bib:emp08}. 

There are two approaches in the literature for deriving recursion equations for the map generating functions from the genus expansions. One of these is based on the matrix resolvent (Green's function) for random matrices \cite{bib:a, bib:ebook}. This is often referred to as the {\it topological recursion}. The other is based on the recurrence matrix resolvent of orthogonal polynomials associated to the probability measure of the random matrix ensemble \cite{bib:em, bib:emp08, bib:bd13, bib:ew}. In the latter approach one is able to bring to bear rigorous analytical methods from the classical theory of orthogonal polynomials as well as Riemann-Hilbert analysis in order to get detailed estimates on the validity of the genus expansion and associated recurrence equations. This enables one to establish that the generating functions of the form displayed above, initially taken to be formal, are in fact convergent \cite{bib:em} and to determine their maximal domains of holomorphy. This is essential for the derivation of explicit closed rational expressions for the $z_g$ initiated in \cite{bib:er} and continued in \cite{bib:elt22b} that we use. We refer the reader to the last citation for additional details and will take as our starting point the rational expressions for $z_g$ and $e_g$ that were developed in \cite{bib:emp08, bib:er, bib:er14}.

While map enumeration has stemmed from recursion schemes of the type just mentioned, what we do in this paper is decidedly different. We aim to derive expressions for the map counts ${\mathcal N}(g,j)$ that are of {\em closed form} in terms of {\em known} classical combinatorial functions. This is what we refer to as a {\it non-recursive count}. A few examples are available in the literature: ${\mathcal N}_{2\nu,z}(0,j)$ and ${\mathcal N}_{2\nu,e}(0,j)$ were derived in \cite{bib:emp08}, ${\mathcal N}_{4,e}(2,j)$ was introduced in \cite{bib:biz}, and more recently, ${\mathcal N}_{4,e}(3,j)$ was obtained in \cite{bib:bgm}. To the best of our knowledge, the only other instances of known closed-form expressions are ${\mathcal N}_{3,e}(0,j)$ and ${\mathcal N}_{3,e}(1,j)$, for triangulations on a surface of genus 0 or 1  \cite{bib:bd13}. This article establishes general expressions for ${\mathcal N}_{2\nu,z}(g,j)$ and ${\mathcal N}_{2\nu,e}(g,j)$ in terms of a linear combination of a specified set of coefficients appearing in the rational forms for $z_g$ and $e_g$, weighted by hypergeometric functions depending on the genus $g$ and the number of vertices $j$. In addition, we introduce a dynamic perspective which, in the case of 4-valent maps, leads to a remarkable analytical simplification that enables us to efficiently express the desired non-recursive counts in terms of polynomials, powers, and factorials of the number of vertices $j$, for fixed but arbitrary genus, as shown in Tables \ref{tab:counts_z} and \ref{tab:counts_e} below. These tables suggest a general structure for the counts ${\mathcal N}_{4,z}(g,j)$ and ${\mathcal N}_{4,e}(g,j)$ of the form
\[
{\mathcal N}_{4,z}(g,j) = (-1)^g\, 12^j \left(\prod_{k=0}^{2g-2}(j-k)\right)
  \left(\frac{\left(2 j \right)!}{j !}\, P^{(g)}_{\lfloor g/2 \rfloor}(j) - \big(4^{j} j !\big)\, P^{(g)}_{\lfloor (g-1)/2 \rfloor}(j)\right), \quad g \ge 1, \ j \ge 0
\]
and
\[
{\mathcal N}_{4,e}(g,j) = (-1)^g\, 12^{j-1} \left(\prod_{k=1}^{2g-3}(j-k)\right)
  \left(\frac{\left(2 j \right)!}{j !}\, Q^{(g)}_{\lfloor g/2 \rfloor}(j) - \big(4^{j-1} j !\big)\, Q^{(g)}_{\lfloor (g-1)/2 \rfloor}(j)\right), \quad g \ge 2, \ j \ge 1
\]
where $P^{(g)}_h(j)$ and $Q^{(g)}_h(j)$ denote polynomials of degree $h$ in $j$ whose coefficients are positive rational numbers that depend only on $g$. The reduction leading to these expressions works for all genera in the case of ${\mathcal N}_{4,z}(g,j)$, and for $g \ge 2$ in the case of ${\mathcal N}_{4,e}(g,j)$. Counts of regular genus 1 maps are indeed special, and we provide a separate derivation for ${\mathcal N}_{4,e}(1,j)$ in Appendix \ref{app:e1}. 

\begin{center}
\begin{table}[h]
\def\arraystretch{2.4}%
\begin{tabular}{|c||c|}
\hline
  genus $g$ &  ${\mathcal N}_{4,z}(g,j)$ \\
  \hline
  1   &  $\displaystyle 12^{j} j \left(\frac{4^{j} j !}{12}-\frac{\left(2 j \right)!}{6 j !}\right)$\\
  \hline
  2 & $\displaystyle 12^{j} \left(\prod_{k=0}^2 (j-k)\right)
  \left(\frac{\left(2 j \right)!}{j !}\frac{7 (2 j +3)}{1080}-4^{j} j !\,\frac{7}{384}\right)$\\
  \hline
  3 & $\displaystyle 12^{j} 
  \left(\prod_{k=0}^4(j-k)\right)
  \left(4^{j} j ! \left(\frac{245 j}{497664}+\frac{12041}{4976640}\right)-\frac{\left(2 j \right)!}{j !}\frac{484 j +279}{136080}\right)$\\
  \hline
  4 & $\begin{matrix}\displaystyle 12^{j} \left(\prod_{k=0}^6 (j-k)\right)& \displaystyle
  \hskip -.3truecm \left(\frac{\left(2 j \right)!}{j !}\left(\frac{37079}{750578400} j^{2}+\frac{6067121}{10508097600} j +\frac{127}{604800}\right)\right.\\
  & \displaystyle \left. -4^{j} j ! \left(\frac{7805 j}{47775744}+\frac{1699447}{6688604160}\right)\right)
\end{matrix}$\\
  \hline
  5 & $\begin{matrix} \displaystyle 12^{j} \left(\prod_{k=0}^8 (j-k)\right)&
  \displaystyle \hskip -.3truecm \left(4^{j} j ! \left(\frac{38213}{27518828544} j^{2}+\frac{1702225}{55037657088} j +\frac{482999}{20384317440}\right)\right. \\
  & \displaystyle \left. -\frac{\left(2 j \right)!}{j !}\left(\frac{491951}{25519665600} j^{2}+\frac{1849339}{25519665600} j +\frac{73}{3421440}\right)\right)
 \end{matrix}$ \\
  \hline
  6 & $\begin{matrix}\displaystyle 12^{j} \left(\prod_{k=0}^{10} (j-k)\right)&
  \displaystyle \hskip -.3truecm \left(\frac{\left(2 j \right)!}{j !}\left(\frac{5004682489}{45165980162160000} j^{3}+\frac{389578665043}{92213876164410000} j^{2}\right. \right.\\
  & \displaystyle \left. \hskip .8truecm  +\frac{69512878587263}{8852532111783360000} j +\frac{1414477}{653837184000}\right) \\ 
  & \displaystyle \hskip -1.3truecm -4^{j} j ! \left(\frac{54362497}{87179648827392} j^{2}+\frac{381046393}{87179648827392} j \right.\\
  & \displaystyle \left. \left. \hskip 3.9truecm +\frac{43567716553}{20341918059724800}\right)\right)
 \end{matrix}$\\
  \hline
  7 & $\begin{matrix} \displaystyle 12^{j} \left(\prod_{k=0}^{12} (j-k)\right)&
  \displaystyle \hskip -1.truecm \left(4^{j} j ! \left(\frac{6334396069}{2448004539073167360} j^{3}+\frac{2801562779}{18133366956097536} j^{2}\right. \right.\\
  & \displaystyle \left. \hskip .8truecm +\frac{5032281513503}{9792018156292669440} j +\frac{46115735865131}{228480423646828953600}\right)\\
  & \displaystyle \hskip -.5truecm -\frac{\left(2 j \right)!}{j !}\left(\frac{953637649}{16937242560810000} j^{3}+\frac{335779266491}{491807339543520000} j^{2} \right.\\
  & \displaystyle \left. \left. \hskip .5truecm +\frac{20962080883129}{26557596335350080000} j
  +\frac{8191}{37362124800}\right)\right)
\end{matrix}$\\
  \hline
\end{tabular}
\caption{Expressions for the number of 2-legged 4-valent $g$-maps with $j$ vertices.}
\label{tab:counts_z}
\end{table}
\end{center}

\begin{center}
\begin{table}[h]
\bgroup
\def\arraystretch{2.5}%
\begin{tabular}{|c||c|}
\hline
  genus $g$ &  ${\mathcal N}_{4,e}(g,j)$ \\
  \hline
  1 & $\displaystyle j!\, 12^{j-1}
\left({2 j-1 \choose j-1}\ {_3}F_2 
\left( \genfrac{}{}{0pt}{}
{1,1,1-j}{2,j+1}; -1 \right)
 - {2 j-1 \choose j-2}\ {_3}F_2 
\left( \genfrac{}{}{0pt}{}
{1,1,2-j}{2,j+2}; -1 \right)\right)$\\
\hline
  2$^{(*)}$ & $\displaystyle 12^{j -1} \left(j -1\right) \left(\frac{ \left(2 j \right)!}{j !}\left(\frac{7 j}{90}+\frac{1}{40}\right)-4^{j -1} j !\,\frac{13}{48}\right)$\\
  \hline
  3$^{(*)}$ & $\displaystyle 12^{j -1}
  \left(\prod_{k=1}^3(j-k)\right)
  \left(4^{j -1} j ! \left(\frac{245 j}{20736}+\frac{781}{41472}\right)-\frac{\left(2 j \right)!}{j !}\left(\frac{337 j}{22680}+\frac{1}{1008}\right) \right)$\\
  \hline
  4 & $\begin{matrix}\displaystyle 12^{j -1}
  \left(\prod_{k=1}^5(j-k)\right)& \displaystyle \hskip -.3truecm
  \left(\frac{\left(2 j \right)!}{j !}\left(\frac{37079}{125096400} j^{2}+\frac{86356}{54729675} j +\frac{1}{28800}\right)\right.\\
  & \left. \hskip -1.truecm \displaystyle \hskip -.6 truecm - 4^{j -1} j ! \left(\frac{5845 j}{1990656}+\frac{23297}{39813120}\right)\right)
\end{matrix}$\\
  \hline
  5 & $\begin{matrix} \displaystyle 12^{j -1} \left(\prod_{k=1}^7(j-k)\right)&
  \displaystyle \hskip -.3truecm \left(4^{j -1} j ! \left(\frac{38213}{1146617856} j^{2}+\frac{915313}{2293235712} j -\frac{1940327}{53508833280}\right) \right.\\
  & \left. \displaystyle \hskip -.5truecm -\frac{\left(2 j \right)!}{j !} \left(\frac{211033}{2319969600} j^{2}+\frac{8139013}{71455063680} j +\frac{1}{887040}\right)\right)
 \end{matrix}$ \\
  \hline
  6 & $\begin{matrix}\displaystyle 12^{j -1} \left(\prod_{k=1}^9(j-k)\right) & \displaystyle 
 \hskip -.3truecm \left(\frac{\left(2 j \right)!}{j !}\left(\frac{5004682489}{7527663360360000} j^{3}+\frac{7523688218141}{491807339543520000} j^{2}\right.\right.\\
  & \displaystyle \left. +\frac{20903746897}{3944978659440000} j +\frac{691}{19813248000}\right)\\
  & \displaystyle \hskip -1.2truecm -4^{j -1} j !  \left(\frac{44274265}{3632485367808} j^{2}+\frac{135152437}{3632485367808} j \right.\\
  & \displaystyle \left. \left. -\frac{522404797}{77052719923200}\right)\right)
\end{matrix}$\\
  \hline
  7 & $\begin{matrix} \displaystyle
  12^{j -1} \left(\prod_{k=1}^{11}(j-k)\right) & \displaystyle 
  \hskip -.5truecm \left(4^{j -1} j ! \left(\frac{6334396069}{102000189128048640} j^{3}+\frac{27364604401}{11333354347560960} j^{2}\right.\right.\\
  & \displaystyle \left. \hskip 1.3truecm
  +\frac{988175350991}{408000756512194560} j -\frac{358193577649}{732309050150092800}\right)\\
  & \displaystyle \hskip -.1truecm -\frac{\left(2 j \right)!}{j !}\left(\frac{25511722279}{90331960324320000} j^{3}+\frac{2675917530049}{1475422018630560000} j^{2}\right.\\
  & \displaystyle \left. \left. \hskip -.3truecm
  +\frac{5035943441}{69980491002240000} j + \frac{1}{958003200}\right)\right)
\end{matrix}$\\
  \hline
\end{tabular}
\egroup
\caption{Expressions for the number of 4-valent $g$-maps with $j$ vertices ($j \ge 1$). In the first row, ${_3}F_2$ is the generalized hypergeometric function. Starred $^{(*)}$ rows correspond to results previously known in the literature: \cite{bib:biz} for $g=2$ and \cite{bib:bgm} for $g=3$.} \label{tab:counts_e}
\end{table}
\end{center}

The remainder of this article is organized as follows. In Section \ref{sec:2}, we derive closed-form expressions for the map counts in terms of contour integrals. In Section \ref{sec:rec_relations}, we develop vector difference equations that will lead to a different formulation of the map counts in the 4-valent case. Section \ref{sec:4-valent} shows that the dynamic description associated with the vector difference equations simplifies dramatically when $\nu = 2$. This allows us to formulate map counts in terms of the contraction of two vectors: a universal row vector truncated to a finite, genus-dependent length, and a column vector of the same size, whose entries are specific initial coefficients. It also explains how the expressions in Tables \ref{tab:counts_z} and \ref{tab:counts_e} are obtained. The conclusions, in Section \ref{sec:7}, point to some future directions of study based on the results of this paper. Finally, Appendices \ref{app:e1} and \ref{app:D} provide detailed calculations of the counts in terms of contour integrals, Appendices \ref{app:4-valent_dynamics} and \ref{app:map_cts_dot_product} describe the 4-valent case, and Appendices \ref{app:zg} and \ref{app:eg} respectively list the initial vectors of coefficients used to establish the formulas of Tables \ref{tab:counts_z} and \ref{tab:counts_e}.

\section{Generating Functions and closed-form counts} \label{sec:2}
It was shown in \cite{bib:emp08} that the number of $2\nu$-regular maps with $j$ vertices and two legs that can be embedded in a surface of genus $g$ may be expressed as the following derivative
\begin{equation}
\label{eq:map_counts_z}
{\mathcal N}_{2\nu,z}(g,j) = \left. (-1)^j \, \dfrac{d^j z_g}{d t^j}\right\vert_{t=0}.
\end{equation}
Here, $z_g$ is a rational functions of $z_0$, 
\[
z_g(z_0) = \dfrac{z_0 (z_0-1) P_{3 g - 2}(z_0)}{\left(\nu - (\nu - 1) z_0\right)^{5 g - 1}},
\]
where $P_{3 g - 2}(z_0)$ is a polynomial of degree $3 g - 2$ in $z_0$, and $z_0$ is a function of $t$ such that 
\begin{equation}
\label{eq:string}
1 = z_0 + 2 \nu \, t {2 \nu - 1 \choose \nu - 1} z_0^\nu.
\end{equation}
Equation \eqref{eq:string} is known as the string equation. Note that $z_0 = 1$ when $t=0$ and that implicit differentiation with respect to $t$ leads to 
\begin{equation}
\label{eq:dz_dt}
\dfrac{d z_0}{d t} = - c_\nu \, \dfrac{z_0^{\nu+1}}{\nu - (\nu - 1) z_0}, \qquad c_\nu = 2 \nu {2 \nu - 1 \choose \nu - 1}.
\end{equation}
The derivative in Equation \eqref{eq:map_counts_z} can thus be conveniently calculated by finding its expression in terms of $z_0$ only, before setting $z_0=1$. In addition, writing $z_g/z_0$ as a partial fraction expansion in powers of $\nu - (\nu-1) z_0$, we have
\begin{equation} \label{eq:zg_parfrac}
z_g (z_0) = z_0 \sum_{\ell = 0}^{3 g-1} \dfrac{a_{z,\ell}^{(0)}(g,\nu)}{\left(\nu - (\nu-1) z_0\right)^{2g+\ell}}, \qquad a_{z,\ell}^{(0)}(g,\nu) \in \mathbb{R}.
\end{equation}
Similarly, the number of $2\nu$-regular maps with $j$ vertices that can be embedded in a surface of genus $g$ is given by \cite{bib:emp08,bib:er14}
\begin{equation}
\label{eq:map_counts_e}
{\mathcal N}_{2\nu,e}(g,j) = \left. (-1)^j \, \dfrac{d^j e_g}{d t^j}\right\vert_{t=0},
\end{equation}
where, for $g \ge 2$,
\[
e_g(z_0)= \dfrac{(z_0-1)^r
Q_{5g-5-r}(z_0)}{\left(\nu - (\nu-1) z_0\right)^{5g-5}}, \qquad r = \max\left\{1, \left\lfloor \dfrac{2 g - 1}{\nu - 1} \right\rfloor \right\},
\]
and $Q_{5g-5-r}(z_0)$ is a polynomial of degree $5 g - 5 - r$ in $z_0$. We write the partial fraction expansion of $e_g$ in powers of $\nu-(\nu-1)z_0$ as \cite{bib:er14}
\begin{equation}
\label{eq:eg_parfrac}
e_g(z_0)=C^{(g)}+\sum_{\ell=0}^{3g-3}\dfrac{b_{e,\ell}^{(0)}(g,\nu)}{(\nu-(\nu-1) z_0)^{2g+\ell-2}}.
\end{equation}
Below, when it is clear from context, we suppress the $g$ and $\nu$ dependence of $a_{z,\ell}^{(0)}(g,\nu)$ and $b_{e,\ell}^{(0)}(g,\nu)$.

As defined in \eqref{eq:map_counts_z} and \eqref{eq:map_counts_e}, ${\mathcal N}_{2\nu,z}(g,j)$ and ${\mathcal N}_{2\nu,e}(g,j)$ are obtained by taking successive derivatives of the generating functions evaluated at $t = 0$. But from the convergence of their series expansions \cite{bib:em}, it follows that these functions are analytic, at least in the vicinity of $t=0$. Consequently, the counts may also be expressed as contour integrals,
\begin{eqnarray}
{\mathcal N}_{2\nu,z}(g,j) &=& 
\dfrac{j!\, c_\nu^j}{2\pi i}\oint \dfrac{z_g(\eta)}{\eta^{j+1}} d\eta \label{eq:z_counts_Cauchy}\\
{\mathcal N}_{2\nu,e}(g,j) &=&
\dfrac{j!\, c_\nu^j}{2\pi i}\oint \dfrac{e_g(\eta)}{\eta^{j+1}} d\eta, \label{eq:e_counts_Cauchy}
\end{eqnarray}
where $\eta = - c_\nu t$ and the variable $z_0$ are related by the string equation $1 = z_0 - \eta z_0^\nu$, and the integral is taken along a sufficiently small contour encircling the origin. In Appendices \ref{app:e1} and \ref{app:D},
we show that a direct evaluation of the above contour integrals leads to expressions for ${\mathcal N}_{2\nu,z}(g,j)$ and ${\mathcal N}_{2\nu,e}(g,j)$ in terms of hypergeometric functions, as stated in the following theorem. 
\begin{theorem} \label{th:counts_hypergeom}
The number of $2\nu$-valent maps with $j$ vertices and two legs that can be embedded in a surface of genus $g \ge 1$ is given by
\begin{equation}
\label{eq:z_couts_hypergeom}
{\mathcal N}_{2\nu,z}(g,j) = j!\, c_\nu^j (\nu-1)^{j} \sum_{\ell = 0}^{3g-1} \left(a_{z,\ell}^{(0)} {(2g-2) + (\ell+j)  \choose j} 
\ {_2}F_1
\left( \genfrac{}{}{0pt}{}
{-j,\ -\nu j}{2-2g-(\ell+j)}; \frac{1}{1-\nu} \right) \right)
\end{equation}
and the number of $2\nu$-valent $g$-maps with $j \ge 1$ vertices is, for $g \ge 2$,
\begin{equation}
\label{eq:e_couts_hypergeom}
{\mathcal N}_{2\nu,e}(g,j) = j!\, c_\nu^j (\nu - 1)^j  \sum_{\ell = 0}^{3g-3} \left(b_{e,\ell}^{(0)}
{(2g-4) + (\ell+j)  \choose j}
\ {_2}F_1
\left( \genfrac{}{}{0pt}{}
{-j,\ 1- \nu j}{4-2g-(\ell+j)}; \frac{1}{1-\nu} \right)\right),
\end{equation}
where ${_2}F_1$ denotes the Gauss hypergeometric function \cite{bib:nist}. The coefficients $a_{z,\ell}^{(0)}$ and $b_{e,\ell}^{(0)}$ depend on $g$ and $\nu$ and are defined in Equations \eqref{eq:zg_parfrac} and \eqref{eq:eg_parfrac} respectively. For regular $1$-maps and $j \ge 1$,
\begin{eqnarray}
\label{eq:e1_counts}
{\mathcal N}_{2\nu,e}(1,j) &=&
\dfrac{j!\, c_\nu^j}{12}
\left( (\nu - 1) {\nu j-1 \choose j-1}\, {_3}F_2 \left( \genfrac{}{}{0pt}{}
{1,\ 1,\ 1-j}{2,(\nu-1)j+1}; 1-\nu \right) \right. \nonumber \\
&& \qquad \quad \left. - (\nu - 1)^2 {\nu j-1 \choose j-2}\ {_3}F_2
\left( \genfrac{}{}{0pt}{}
{1,\ 1,\ 2-j}{2,(\nu-1)j+2}; 1-\nu \right)\right),
\end{eqnarray}
where ${_3}F_2$ is the generalized hypergeometric function \cite{bib:nist}.
\end{theorem}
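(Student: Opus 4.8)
The plan is to obtain all three formulas by evaluating the contour integrals \eqref{eq:z_counts_Cauchy}--\eqref{eq:e_counts_Cauchy} (valid because $z_g$ and $e_g$ are analytic near $t=0$ by \cite{bib:em}) after a change of variables from $\eta$ to $z_0$. The string equation $1=z_0-\eta z_0^\nu$ gives $\eta=(z_0-1)z_0^{-\nu}$, a biholomorphism near $z_0=1$ (where $\eta=0$) with $d\eta=z_0^{-\nu-1}\bigl(\nu-(\nu-1)z_0\bigr)\,dz_0$, so a small positively oriented loop about $\eta=0$ corresponds to one about $z_0=1$ and
\[
\frac{z_g(\eta)}{\eta^{j+1}}\,d\eta=\frac{z_g(z_0)\,z_0^{\nu j-1}\bigl(\nu-(\nu-1)z_0\bigr)}{(z_0-1)^{j+1}}\,dz_0 ,
\]
with the analogous identity for $e_g$. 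Thus ${\mathcal N}_{2\nu,z}(g,j)$ and ${\mathcal N}_{2\nu,e}(g,j)$ become $j!\,c_\nu^j$ times a residue at $z_0=1$. The detailed computations will be placed in Appendices \ref{app:e1} and \ref{app:D}.

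For $z_g$, and for $e_g$ with $g\ge2$, I would substitute the partial-fraction expansions \eqref{eq:zg_parfrac} and \eqref{eq:eg_parfrac}. The constant term $C^{(g)}$ of $e_g$ contributes nothing, because $\eta^{-(j+1)}$ has vanishing residue at $\eta=0$ for $j\ge1$ (equivalently, the associated $z_0$-residue is proportional to $\binom{\nu j-1}{j}-(\nu-1)\binom{\nu j-1}{j-1}=0$). Each surviving term reduces to a residue at $z_0=1$ of a rational function of the shape $z_0^{A}\big/\bigl[(z_0-1)^{j+1}(\nu-(\nu-1)z_0)^{B}\bigr]$; putting $z_0=1+w$, so that $\nu-(\nu-1)z_0=1-(\nu-1)w$, this residue equals the coefficient of $w^j$ in $(1+w)^{A}\bigl(1-(\nu-1)w\bigr)^{-B}$. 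Expanding both factors by the binomial series and collecting powers gives a single terminating sum
\[
\sum_{n=0}^{j}\binom{A}{n}\binom{B+j-n-1}{j-n}(\nu-1)^{j-n},
\]
with $(A,B)=(\nu j,\,2g+\ell-1)$ for $z_g$ and $(A,B)=(\nu j-1,\,2g+\ell-3)$ for $e_g$.

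The last step recasts this sum as a Gauss hypergeometric function. After factoring out $(\nu-1)^j$ and writing $(\nu-1)^{-n}=(-1)^n(1-\nu)^{-n}$, I would apply $\binom{\nu j}{n}=(-1)^n(-\nu j)_n/n!$ (resp.\ $\binom{\nu j-1}{n}=(-1)^n(1-\nu j)_n/n!$) together with the elementary identity $\binom{M+j-n}{j-n}\big/\binom{M+j}{j}=(-j)_n/(-M-j)_n$, where $M=B-1$ equals $2g+\ell-2$ (resp.\ $2g+\ell-4$); the two factors of $(-1)^n$ cancel and the sum collapses to $\binom{M+j}{j}\,{}_2F_1\!\left(-j,\,b;\,-M-j;\,1/(1-\nu)\right)$ with $b=-\nu j$ (resp.\ $b=1-\nu j$). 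Since the parameter $-j$ forces termination at $n=j$, no separate truncation argument is needed, and summing over $\ell$ reproduces \eqref{eq:z_couts_hypergeom} and \eqref{eq:e_couts_hypergeom} verbatim.

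For regular $1$-maps the same change of variables applies, but $e_1$ is not rational: using its known logarithmic form $e_1(z_0)=-\tfrac1{12}\log\bigl(\nu-(\nu-1)z_0\bigr)$ \cite{bib:emp08}, one has $e_1(1+w)=\tfrac1{12}\sum_{k\ge1}\tfrac{(\nu-1)^k}{k}w^k$, so ${\mathcal N}_{2\nu,e}(1,j)$ equals $\tfrac{j!\,c_\nu^j}{12}$ times the coefficient of $w^j$ in $\bigl(\sum_{k\ge1}\tfrac{(\nu-1)^k}{k}w^k\bigr)(1+w)^{\nu j-1}\bigl(1-(\nu-1)w\bigr)$. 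Expanding the last two factors and using the telescoping $\tfrac1k-\tfrac1{k-1}=-\tfrac1{k(k-1)}$ collapses the double sum into a difference $S_1-S_2$ with $S_1=\sum_{k=1}^{j}\tfrac{(\nu-1)^k}{k}\binom{\nu j-1}{j-k}$ and $S_2=(\nu-1)\sum_{k=1}^{j-1}\tfrac{(\nu-1)^k}{k}\binom{\nu j-1}{j-1-k}$; the bookkeeping $\tfrac{(1)_n(1)_n}{(2)_n\,n!}=\tfrac1{n+1}$ (which accounts for the lower parameter $2$) then identifies $S_1$ and $S_2$ with the two ${}_3F_2(\,\cdot\,;1-\nu)$ of \eqref{eq:e1_counts}. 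I expect the only real obstacle to be this bookkeeping---keeping every Pochhammer shift, sign, and the argument $1/(1-\nu)$ consistent, and, in the $g=1$ case, carrying out the rearrangement that splits one collapsed sum into the prescribed difference of two hypergeometric series; checking small cases such as ${\mathcal N}_{4,e}(1,1)=1$ and ${\mathcal N}_{4,e}(1,2)=60$ against Table \ref{tab:counts_e} is a convenient consistency check.
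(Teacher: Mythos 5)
Your proposal is correct and follows essentially the same route as the paper's Appendices \ref{app:e1} and \ref{app:D}: change variables via the string equation, insert the partial fraction expansions (with the constant $C^{(g)}$ dropping out for $j\ge 1$), extract the residue at $z_0=1$ by binomial and negative-binomial expansion, and repackage the resulting terminating sum as ${_2}F_1$ (resp.\ ${_3}F_2$ for $e_1$) via the same Pochhammer identities. The only quibble is that the collapse to $S_1-S_2$ in the $e_1$ case comes from plain multiplication by $1-(\nu-1)w$ rather than any telescoping identity, but the stated intermediate sums are correct.
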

In Theorem \ref{th:counts_hypergeom}, the hypergeometric functions with negative parameters are defined through the convention
\[
{_m}F_n \left( \genfrac{}{}{0pt}{}
{-j,\ k_1,\, k_2,\, \dots k_{m-1}}{-j-h,\ h_1,\, \cdots,h_{n-1}}; x \right) = \lim_{b \to -j-h} \left(\lim_{a \to -j}\ {_m}F_n \left( \genfrac{}{}{0pt}{}
{a,\ k_1,\, k_2,\, \dots k_{m-1}}{b,\ h_1,\,\cdots,h_{n-1}}; x \right) \right), \quad h \ge 0.
\]
This is in line with Equation 15.2.5 of  \cite{bib:nist} and consistent with the definition used by Maple \cite{bib:maple} when evaluating these functions. Consequently, the expressions for ${\mathcal N}_{2\nu,z}(g,j)$, $g \ge 1$, and ${\mathcal N}_{2\nu,e}(g,j)$, $g \ge 2$, are linear combinations of the $a_{z,\ell}^{(0)}$ and $b_{e,\ell}^{(0)}$, with finite hypergeometric coefficients. A general framework to obtain all of the numbers 
\[
\left\{a_{z,\ell}^{(0)},\, 0 \le \ell \le 3 g - 1 \right\} \qquad \text{and} \qquad \left\{b_{e,\ell}^{(0)},\, 0 \le \ell \le 3 g - 3 \right\},
\]
was developed in \cite{bib:elt22b}, which also provided expressions for $z_g$ and $e_g$ for $g \le 7$ in the 4-valent ($\nu=2$ case). Together with the results of \cite{bib:elt22b}, Theorem \ref{th:counts_hypergeom} therefore provides closed-form expressions of even-valent map counts on surfaces of arbitrary genus.

In the next section, we build on the analysis of \cite{bib:er, bib:er14}, which was motivated by the structure of the Cauchy integral representations of Equations \eqref{eq:z_counts_Cauchy} and \eqref{eq:e_counts_Cauchy}, to describe, in terms of vector difference equations, the dynamics of the coefficients $a_{z,\ell}^{(j)}$ and $b_{e,\ell}^{(j)}$ appearing in the partial fraction expansions of the $j^{th}$ derivatives of $z_g$ and $e_g$. This will allow us to derive the expressions (other than ${\mathcal N}_{4,e}(1,j)$) provided in Tables \ref{tab:counts_z} and \ref{tab:counts_e} in the 4-valent case.

\section{Recurrence relations} \label{sec:rec_relations}
Because of the presence of hypergeometric functions, the closed-form expressions given in Theorem \ref{th:counts_hypergeom} involve sums whose number of terms grows with $j$. In contrast, the formulas given in Table \ref{tab:counts_z} for $g \ge 1$ and Table \ref{tab:counts_e} for $g \ge 2$ do not. Such a remarkable property is the consequence of a simplification that occurs in the 4-valent case, the nature of which may be understood by following the dynamics of the coefficients $a_{z,\ell}^{(j)}$ and $b_{e,\ell}^{(j)}$. This and the next sections are devoted to analyzing this phenomenon.

Taking the derivative of \eqref{eq:zg_parfrac} with respect to $t$ and using the expression of $d z_0 / d t$ given in \eqref{eq:dz_dt}, we obtain
\[
\dfrac{d z_g}{d t} = - c_\nu\, z_0^{\nu + 1} \sum_{\ell = 0}^{3 g} \dfrac{a_{z,\ell}^{(1)}}{\left(\nu - (\nu-1) z_0\right)^{2g+\ell+1}}, \qquad a_{z,\ell}^{(1)} \in \mathbb{R},
\]
where each coefficients $a_{z,\ell}^{(1)}$ is a linear combinations of at most two of the $\left\{a_{z,k}^{(0)}\right\}$ coefficients. More generally, the $j$th derivative of $z_g$ with respect to $t$ may be written as \cite{bib:er}
\begin{equation}
    \label{eq:dz}
z_g^{(j)}(z_0) := (-1)^j\, \dfrac{d^j z_g}{d t^j} = c_\nu^j\, z_0^{j \nu + 1} \sum_{\ell = 0}^{3 g + j - 1} \dfrac{a_{z,\ell}^{(j)}}{\left(\nu - (\nu-1) z_0\right)^{2g+\ell+j}}, \qquad a_{z,\ell}^{(j)} \in \mathbb{R}.
\end{equation}
Similarly, one can write \cite{bib:er14}
\begin{equation}
    \label{eq:de}
e_g^{(j)}(z_0) := (-1)^j\, \dfrac{d^j e_g}{d t^j} = c_\nu^j\, z_0^{j \nu + 1} \sum_{\ell = 0}^{3 g + j - 4} \dfrac{b_{e,\ell}^{(j)}}{\left(\nu - (\nu-1) z_0\right)^{2g+\ell+j-1}}, \qquad j \ge 1,
\end{equation}
where the coefficients $b_{e,\ell}^{(j)}$ are scalars.
Both Equations \eqref{eq:dz} and \eqref{eq:de} provide expressions of the $j$th derivative of a rational function $G(z_0)$ in the form
\begin{equation}
    \label{eq:df}
G^{(j)}(z_0) := (-1)^j\, \dfrac{d^j G}{d t^j} =  c_\nu^j\, z_0^{j \nu + 1} \sum_{\ell = 0}^{\alpha +j} \dfrac{q_\ell^{(j)}}{\left(\nu - (\nu-1) z_0\right)^{\beta+\ell+j}}, \qquad j \ge j_G,
\end{equation}
where
\begin{align}
\label{eq:param}
\text{For } G &= z_g, & \alpha &= 3 g - 1, & \beta &= 2 g, & j_G &= 0. \nonumber\\ & \\
\text{For } G &= e_g, & \alpha &= 3 g - 4, & \beta &= 2 g - 1, & j_G &= 1. \nonumber
\end{align}

Our goal is to derive the vector recurrence relation satisfied by the coefficients $q_\ell^{(j)}$. Taking the derivative of \eqref{eq:df} with respect to $t$ and using \eqref{eq:dz_dt}, we have 
\begin{align*}
G^{(j+1)}(z_0)
&= - c_\nu^j \left((j \nu + 1) z_0^{j \nu} \sum_{\ell = 0}^{\alpha +j} \dfrac{q_\ell^{(j)}}{\left(\nu - (\nu-1) z_0\right)^{\beta+\ell+j}} + z_0^{j \nu + 1} \sum_{\ell = 0}^{\alpha +j} \dfrac{q_\ell^{(j)} (\nu-1)(\beta+\ell+j)}{\left(\nu - (\nu-1) z_0\right)^{\beta+\ell+j+1}}\right) \dfrac{d z_0}{d t}\\
&= c_\nu^{j+1} z_0^{(j+1) \nu+1} \left(\sum_{\ell = 0}^{\alpha +j} \dfrac{(j \nu + 1) q_\ell^{(j)}}{\left(\nu - (\nu-1) z_0\right)^{\beta+\ell+j+1}} + \sum_{\ell = 0}^{\alpha +j} \dfrac{q_\ell^{(j)} (\nu-1) (\beta+\ell+j) z_0}{\left(\nu - (\nu-1) z_0\right)^{\beta+\ell+j+2}}\right)
\end{align*}
Writing the second sum as
\begin{align*}
&\sum_{\ell = 0}^{\alpha +j} \dfrac{-q_\ell^{(j)} \big(\nu-(\nu-1) z_0-\nu\big)(\beta+\ell+j)}{\left(\nu - (\nu-1) z_0\right)^{\beta+\ell+j+2}}\\
= &\sum_{\ell = 0}^{\alpha +j} \dfrac{-q_\ell^{(j)} (\beta+\ell+j)}{\left(\nu - (\nu-1) z_0\right)^{\beta+\ell+j+1}} + \sum_{\ell = 0}^{\alpha +j} \dfrac{q_\ell^{(j)} \nu(\beta+\ell+j)}{\left(\nu - (\nu-1) z_0\right)^{\beta+\ell+j+2}}\\
= &\sum_{\ell = 0}^{\alpha +j} \dfrac{-q_\ell^{(j)} (\beta+\ell+j)}{\left(\nu - (\nu-1) z_0\right)^{\beta+\ell+j+1}} + \sum_{\ell = 1}^{\alpha +j+1} \dfrac{q_{\ell-1}^{(j)} \nu(\beta+\ell-1+j)}{\left(\nu - (\nu-1) z_0\right)^{\beta+\ell+j+1}},
\end{align*}
we see that the coefficients $q_\ell^{(j+1)}$ of $G^{(j+1)}(z_0)$ are related to those of $G^{(j)}(z_0)$ by
\[
q_\ell^{(j+1)}=(j \nu + 1 - \beta - \ell - j) q_\ell^{(j)} + q_{\ell-1}^{(j)} \nu(\beta+\ell-1+j), \quad 0 \le \ell \le \alpha + j + 1,
\]
with the convention that $q_{-1}^{(j)} = 0$. This may be rewritten as the following recurrence for $q_\ell^{(j)}$ in terms of the coefficients at order $j-1$
\begin{equation}
\label{eq:rec}
q_\ell^{(j)}= \nu \big(\beta+\ell+j-2 \big) q_{\ell-1}^{(j-1)} - \big(\beta + \ell - 1 -(\nu - 1)(j-1)\big) q_\ell^{(j-1)},\quad 0 \le \ell \le \alpha + j.
\end{equation}
With $\beta = 2 g$, the above relation appears in Lemma 3.2 of \cite{bib:er} for the derivatives of $z_g$, and for $\beta = 2 g - 1$, it corresponds to Lemma 5.2 of \cite{bib:er14} for the derivatives of $e_g$. We now define the semi-infinite column vector $V^{(j)}$ of coefficients of the partial fraction expansion of $G^{(j)}(z_0)/z_0^{2 j + 1}$ in powers of $\nu - (\nu-1) z_0$,
\begin{equation}
\label{eq:Vj}
V^{(j)}=c_\nu^j \left[q_0^{(j)}, q_1^{(j)}, \cdots, q_\ell^{(j)}, \cdots,  q_{\alpha+j}^{(j)}, 0, \cdots\right]^T, \quad j \ge j_G,
\end{equation}
where the first entry of $V^{(j)}$ corresponds to the lowest power of  $(\nu-(\nu-1) z_0)^{-1}$ in the partial fraction expansion of $\dfrac{G^{(j)}(z_0)}{z_0^{2 j + 1}}$, which by definition of $G^{(j)}$ is always $\beta + j$. The previous calculations are summarized in the following theorem.
\begin{theorem} \label{thm:M}
For $j \ge j_G$, let $V^{(j)}$ be the semi-infinite vector of coefficients of the partial fraction expansion of $R^{(j)}(z_0)= \dfrac{G^{(j)}(z_0)}{z_0^{2 j + 1}}$ in powers of $\nu - (\nu-1) z_0$, as defined in Equation \eqref{eq:Vj}. Then, there exists a family of semi-infinite sub-diagonal matrices $\Big\{M^{(n)}, n \ge 1 \Big\}$, such that
\[
V^{(j)} = c_\nu^{j-j_G} \left(\overleftarrow{\prod_{n=j_G+1}^j} M^{(n)}\right) \ V^{(j_G)},
\]
where
\[
\overleftarrow{\prod_{n=j_G+1}^j} M^{(n)} = M^{(j)} \, M^{(j-1)} \cdots M^{(j_G+1)}.
\]
In addition, the row $k$ ($k \ge 1$) and column $i$ ($i \ge 1$) entries
of $M^{(n)}$ are given by
\[
\left\{\begin{array}{l}
M^{(n)}[k,i] = 0 \qquad \text{for } i \ne k,\ k-1\\
M^{(n)}[k,k] = n (\nu - 1) - (\beta + k + \nu - 3)\\
M^{(n)}[k,k-1] = \nu(\beta + k + n - 3)
\end{array}.\right.
\]
\end{theorem}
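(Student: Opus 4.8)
The plan is to prove Theorem~\ref{thm:M} by induction on $j$, taking as input the scalar recurrence \eqref{eq:rec} for the coefficients $q_\ell^{(j)}$, which has already been established above by differentiating \eqref{eq:df} and substituting \eqref{eq:dz_dt}. Once \eqref{eq:rec} is in hand, the only work left is to recognize it as a single-step matrix identity and then to unwind the resulting product. I would first fix $G$ (hence the parameters $\alpha,\beta,j_G$ of \eqref{eq:param}), which in turn fixes the family $\{M^{(n)}\}$ through the dependence of its entries on $\beta$ and $\nu$; the assertion ``there exists a family of matrices'' then just amounts to checking that the explicit lower-bidiagonal matrices displayed in the theorem do the job.

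The crux is the bookkeeping that translates \eqref{eq:rec} into the single-step identity $V^{(j)} = c_\nu\, M^{(j)}\, V^{(j-1)}$. The key index dictionary is that the matrix row/column index $k \ge 1$ corresponds to the partial-fraction index $\ell = k-1$: the first entry ($k=1$) of $V^{(j)}$ carries the coefficient $q_0^{(j)}$, attached to the lowest power $(\nu-(\nu-1)z_0)^{-(\beta+j)}$ guaranteed by \eqref{eq:df}. Substituting $\ell = k-1$ into \eqref{eq:rec}, multiplying through by $c_\nu^{\,j}$, and using $V^{(j)}[k] = c_\nu^{\,j} q_{k-1}^{(j)}$ together with $V^{(j-1)}[k] = c_\nu^{\,j-1} q_{k-1}^{(j-1)}$ and $V^{(j-1)}[k-1] = c_\nu^{\,j-1}q_{k-2}^{(j-1)}$, one obtains
\[
V^{(j)}[k] = c_\nu\Big( \nu(\beta + k + j - 3)\,V^{(j-1)}[k-1] \;-\; \big(\beta + k - 2 - (\nu-1)(j-1)\big)\,V^{(j-1)}[k]\Big).
\]
The coefficient of $V^{(j-1)}[k-1]$ is exactly $M^{(j)}[k,k-1]$, and a one-line rearrangement gives $-\big(\beta + k - 2 - (\nu-1)(j-1)\big) = j(\nu-1) - (\beta+k+\nu-3) = M^{(j)}[k,k]$. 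Since $M^{(j)}$ has no other nonzero entries, this is precisely $V^{(j)} = c_\nu M^{(j)} V^{(j-1)}$. Two boundary checks close this step: the convention $q_{-1}^{(j)}=0$ is consistent with $M^{(j)}[1,0]$ lying off the matrix; and since $V^{(j-1)}$ is supported on rows $1,\dots,\alpha+j$ and $c_\nu M^{(j)}$ is lower bidiagonal, the product is supported on rows $1,\dots,\alpha+j+1$, matching the range $0 \le \ell \le \alpha+j$ in \eqref{eq:rec} and the stated length in \eqref{eq:df}.

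With the single-step identity in place, the induction is immediate: the base case $j=j_G$ has an empty product (the identity operator) and reads $V^{(j_G)} = V^{(j_G)}$, while for $j>j_G$,
\[
V^{(j)} = c_\nu M^{(j)} V^{(j-1)} = c_\nu M^{(j)}\Big( c_\nu^{\,j-1-j_G} \big(\overleftarrow{\prod_{n=j_G+1}^{j-1}} M^{(n)}\big) V^{(j_G)}\Big) = c_\nu^{\,j-j_G}\Big(\overleftarrow{\prod_{n=j_G+1}^{j}} M^{(n)}\Big) V^{(j_G)},
\]
where the ordering in $\overleftarrow{\prod}$ simply records that $M^{(j)}$ is applied last; no commutativity is used. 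I expect no genuine obstacle: the analytic content was already spent in deriving \eqref{eq:rec}, and what remains is a faithful-translation argument. The one place to be careful is precisely the index and scaling bookkeeping --- the $\ell \leftrightarrow k-1$ offset, the distribution of powers of $c_\nu$ between $V^{(j)}$, $V^{(j-1)}$ and the explicit prefactor multiplying $M^{(j)}$, and the off-by-one behaviour at the ends $k=1$ and $k=\alpha+j+1$ --- since a slip in any of these would yield a matrix that is still bidiagonal but with the wrong entries.
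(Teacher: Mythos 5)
Your proposal is correct and follows essentially the same route as the paper: the paper derives the scalar recurrence \eqref{eq:rec} in the text immediately preceding the theorem and then presents Theorem~\ref{thm:M} as a summary of those calculations, with the remark after the theorem recording exactly the $\ell = k-1$ index shift you use. Your bookkeeping of the matrix entries, the distribution of the powers of $c_\nu$, and the iteration to the ordered product all match the paper's intended argument.
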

\begin{rem}
We use square brackets to denote vector and matrix entries. Because indices on the entries of $M^{(n)}$ and $V^{(j)}$ start at $1$, the expressions in the above theorem are obtained by setting $\ell=k-1$ in Equation \eqref{eq:rec}.
\end{rem}

We note that there is a unique index $\delta^{(n)}=n (\nu - 1) - \beta + 2$ such that $M^{(n+1)}\Big[\delta^{(n)},\delta^{(n)}\Big] = 0$. This leads to a remarkable simplification in the 4-valent ($\nu = 2$) case, which we exploit in the next section. 

\section{The four-valent case}
\label{sec:4-valent}

As detailed in Appendix \ref{app:4-valent_dynamics}, a remarkable simplification occurs in the 4-valent case: the number of possibly non-zero entries of the vector $V^{(j)}$ is independent of $j$. This happens because when $\nu = 2$, the index $\delta^{(n)}$ introduced at the end of the previous section satisfies the property
\[
\delta^{(n+1)} = \delta^{(n)} + 1.
\] 
Consequently, the dynamics of $V^{(j)}$ may be described in terms of a product of operators defined on a vector space of fixed finite dimension. Appendix \ref{app:map_cts_dot_product} demonstrates that these operators may be simultaneously diagonalized, thereby leading to an explicit formulation for the non-recursive counts, as we will now summarize.

Recall from \eqref{eq:zg_parfrac} that the coefficients of the {\em full} Laurent polynomial of $z_g/z_0$ constitute a vector of length $5g-1$, \[
V_{z,2}^{(0)} = \left[0, \dots, 0,  a^{(0)}_{z,0}, \dots, a^{(0)}_{z,3g-1}\right]^T,
\]
in which the first $2g-1$ coefficients vanish. (By convention the constant term is not included here but it vanishes as well.) In \cite{bib:er} it is shown that $a^{(0)}_{z,3g-1} > 0$ for all $g > 0$. As just mentioned, in the 4-valent case the vector dynamics may be diagonalized, leading to a matrix operator at the $j^{th}$ stage whose row vector of column sums may be explicitly evaluated to be
\[
{\mathcal R}_z^{(j)} = \left[{\mathcal R}_z^{(j)}[1], \cdots, {\mathcal R}_z^{(j)}[5 g - 1]\right], \quad 
{\mathcal R}_z^{(j)}[n] = \frac{1}{2^{n-1}} \sum_{k=1}^{n} \left[\binom{n-1}{k-1} \prod_{\ell=0}^{j-1}
2 (2 \ell + k)\right].
\]
Then, the map counts are given by
\[
{\mathcal N}_{4,z}(g,j) = 12^j \ {\mathcal R}_z^{(j)} \cdot V_{z,2}^{(0)}.
\]
We note that ${\mathcal R}_z^{(j)}$
is completely independent of the genus.
A very similar result is obtained for
the counts in the regular 4-valent case, for $g > 1$. 
\[
{\mathcal N}_{4,e}(g,j) = 12^{j-1}\  
{\mathcal R}_e^{(j)} \cdot V_{e,2}^{(1)},
\]
where now
\[
V_{e,2}^{(1)} = \left[0, \dots, 0,  b^{(1)}_{e,0}, \dots, b^{(1)}_{e,3g-3}\right]^T,
\]
is the vector of coefficients in the Laurent polynomial for $- z_0^{-3} d e_g/ d z_0$  which has length $5g-5$, with the first $2g-3$ coefficients vanishing. 
${\mathcal R}_e^{(j)}$ differs from ${\mathcal R}_z^{(j)}$ only in that the product appearing in ${\mathcal R}_e^{(j)}[n]$ now runs from $1$ to $j-1$ rather than from $0$ to $j-1$, and that they are of different length. All of this is compactly summarized in the following theorem. 
\begin{theorem} \label{thm:gen_counts}
 Assume that $j_G \le j_0 = \beta - 1$, where $j_G$ and $\beta$ are defined in Equation \eqref{eq:param}. For $j \ge j_G$, let ${\mathcal N}_{4,z}(g,j)$ (resp.  ${\mathcal N}_{4,e}(g,j)$) be the number of 4-valent, 2-legged (resp. regular) maps with $j$ vertices that can be embedded in a surface of genus $g$. Each of these numbers is equal to $c_2^{j-j_G} = 12^{j-j_G}$ times the contraction of a row vector ${\mathcal R}^{(j)}$ with a column vector $X^{(j_G)}$ of initial coefficients, both of which have length $s$. The row vector ${\mathcal R}^{(j)}$ satisfies 
\[
{\mathcal R}^{(j)} = \left[{\mathcal R}^{(j)}[1], \cdots, {\mathcal R}^{(j)}[s]\right], \quad 
{\mathcal R}^{(j)}[n] = \frac{1}{2^{n-1}} \sum_{k=1}^{n} \left[\binom{n-1}{k-1} \prod_{\ell=j_G}^{j-1}
2 (2 \ell + k)\right]
\]
and $X^{(j_G)}$ is the vector of coefficients of the partial fraction decomposition of
\[
R^{(j_G)}(z_0) =  \dfrac{G^{(j_G)}(z_0)}{z_0^{2 j_G + 1}}
\]
in powers of $(z_0 - 2)$.
The numbers ${\mathcal N}_{4,z}(g,j)$ and ${\mathcal N}_{4,e}(g,j)$ only differ through the value of $s$, with
\[
s_{z_g} = 5 g - 1 \quad \text{and} \quad s_{e_g} = 5 g - 5,
\]
the parameters $j_G$ and $\beta$ introduced in Equation \eqref{eq:param}, and the initial vector of coefficients $X_g^{(j_G)}$.
\end{theorem}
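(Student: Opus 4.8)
The plan is to deduce Theorem \ref{thm:gen_counts} by specializing the machinery of Theorem \ref{thm:M} to $\nu = 2$ and then diagonalizing the resulting finite-dimensional dynamical system. First I would note that, by Equations \eqref{eq:map_counts_z} and \eqref{eq:map_counts_e} together with the rescaling $z_0^{j\nu+1}\big|_{z_0=1}=1$, the quantity ${\mathcal N}_{4,z}(g,j)$ (resp.\ ${\mathcal N}_{4,e}(g,j)$) equals $c_2^j$ (resp.\ $c_2^j$ with the shift accounted for by $j_G=1$) times the value at $z_0=1$ of $R^{(j)}(z_0)=G^{(j)}(z_0)/z_0^{2j+1}$, which in turn is the sum of the coefficients of its partial fraction expansion in powers of $(\nu-(\nu-1)z_0)=(2-z_0)$, since each basis function $(2-z_0)^{-m}$ takes the value $1$ at $z_0=1$. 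Hence ${\mathcal N}(g,j) = c_2^{j-j_G}\cdot(\text{row of all ones})\cdot V^{(j)}$, and by Theorem \ref{thm:M} this becomes $c_2^{j-j_G}\,\big(\mathbf 1^{T}\,\overleftarrow{\prod}_{n} M^{(n)}\big)\cdot V^{(j_G)}$. So ${\mathcal R}^{(j)}$ is precisely the row vector of column sums of $\overleftarrow{\prod}_{n=j_G+1}^{j} M^{(n)}$, and $X^{(j_G)}=V^{(j_G)}/c_2^{j_G}$ (relabeled so its entries are the partial-fraction coefficients of $R^{(j_G)}$ in powers of $(z_0-2)$).

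The next step is the crucial finite-dimensionality observation, which I would justify via Appendix \ref{app:4-valent_dynamics}: when $\nu=2$ one has $\delta^{(n+1)}=\delta^{(n)}+1$, so the ``killing index'' of the diagonal entries advances in lockstep with the natural growth of the support of $V^{(j)}$; consequently the number of possibly-nonzero entries of $V^{(j)}$ is constant, equal to $s=5g-1$ for $z_g$ and $s=5g-5$ for $e_g$ (this uses $a^{(0)}_{z,3g-1}>0$, resp.\ the corresponding nonvanishing for $e_g$, to see that the window does not shrink from the top, and the vanishing of the first $2g-1$, resp.\ $2g-3$, entries plus the hypothesis $j_G\le j_0=\beta-1$ to locate the window correctly). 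Restricting to this invariant $s$-dimensional subspace, each $M^{(n)}$ becomes a finite sub-diagonal (lower-bidiagonal) $s\times s$ matrix with diagonal entries $n(\nu-1)-(\beta+k+\nu-3)$ and sub-diagonal entries $\nu(\beta+k+n-3)$.

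Then I would diagonalize. With $\nu=2$, write each $M^{(n)}$ as $D(n)+N$ where the diagonal part depends affinely on $n$ and the sub-diagonal shift part $N$ is $n$-independent up to the diagonal scaling; following Appendix \ref{app:map_cts_dot_product}, the family $\{M^{(n)}\}$ is simultaneously diagonalizable by a single (binomial/triangular) change of basis $P$ — one expects $P$ to be the matrix with entries $\binom{k-1}{i-1}$ or a $2$-power-weighted variant, which is exactly what produces the $\binom{n-1}{k-1}/2^{n-1}$ structure in the stated formula for ${\mathcal R}^{(j)}[n]$. In the diagonalizing basis the product $\overleftarrow{\prod}_{n=j_G+1}^{j}M^{(n)}$ becomes a diagonal matrix whose $k$-th entry is $\prod_{\ell=j_G}^{j-1}\big(\text{eigenvalue}_k \text{ at stage }\ell+1\big)=\prod_{\ell=j_G}^{j-1}2(2\ell+k)$; transporting back through $P$ and summing columns yields precisely ${\mathcal R}^{(j)}[n]=\tfrac{1}{2^{n-1}}\sum_{k=1}^{n}\binom{n-1}{k-1}\prod_{\ell=j_G}^{j-1}2(2\ell+k)$. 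Finally, reading off $j_G=0,\beta=2g,s=5g-1$ for $z_g$ and $j_G=1,\beta=2g-1,s=5g-5$ for $e_g$ from \eqref{eq:param}, and identifying $R^{(1)}(z_0)=G^{(1)}(z_0)/z_0^{3}=-z_0^{-3}\,de_g/dz_0$ in the $e_g$ case, gives the two cases of the theorem with the asserted values of $s$.

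The main obstacle I anticipate is establishing the simultaneous diagonalizability of the family $\{M^{(n)}\}$ on the invariant subspace and pinning down the eigenvalues as $2(2\ell+k)$ with the correct binomial conjugating matrix — i.e., verifying that the bidiagonal operators for different $n$ genuinely commute after the $\nu=2$ specialization and that the spectral data assemble into the stated closed form. A secondary technical point is checking that the invariant window has the exact length $s$ claimed (neither truncating a nonzero top coefficient nor carrying spurious leading zeros), which is where the hypotheses $a^{(0)}_{z,3g-1}>0$ and $j_G\le j_0=\beta-1$ enter; both of these are carried out in Appendices \ref{app:4-valent_dynamics} and \ref{app:map_cts_dot_product}, so here I would only sketch the reductions and cite those appendices for the detailed verification.
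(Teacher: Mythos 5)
Your proposal is correct and follows essentially the same route as the paper: reduce to the column sums of the ordered product of Theorem \ref{thm:M}, use $\delta^{(n+1)}=\delta^{(n)}+1$ at $\nu=2$ to confine the dynamics to a window of fixed length $s$ (Appendix \ref{app:4-valent_dynamics}), and then simultaneously diagonalize the resulting commuting bidiagonal family by a binomial triangular matrix to obtain the stated formula for ${\mathcal R}^{(j)}[n]$ (Appendix \ref{app:map_cts_dot_product}). The only cosmetic slips are that the restricted transition matrices $A^{(j)}$ are upper (not lower) bidiagonal after the window shift, and that the positivity of $a^{(0)}_{z,3g-1}$ is not actually needed since $s$ only bounds the band of \emph{possibly} nonzero entries.
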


Note that the vector ${\mathcal R}^{(j)}$ may be viewed as the truncation to a vector of length $s$ of a universal, semi-infinite vector 
\[
\left[{\mathcal R}^{(j)}[1], \cdots, {\mathcal R}^{(j)}[n], \cdots \right], \quad 
{\mathcal R}^{(j)}[n] = \frac{1}{2^{n-1}} \sum_{k=1}^{n} \left[\binom{n-1}{k-1} \prod_{\ell=j_G}^{j-1}
2 (2 \ell + k)\right]
\]
whose entries are {\em independent of the genus $g$}. This description provides interesting insight about the structure of the general formulas of Theorem \ref{th:counts_hypergeom} in the case when $\nu = 2$.

We may now apply Theorem \ref{thm:gen_counts} to obtain closed-form expressions for the number of 4-valent maps for given values of $g$. These require knowledge of $z_g(z_0)$ and $e_g(z_0)$, in order to find the initial vector of coefficients $X_g^{(j_G)}$, as well as the value of the initial index $j_G$. Given the nature of the entries of the row vector ${\mathcal R}^{(j)}$, we will make use of the following formulas, which can be established by direct calculation for $p \ge 0$.
\begin{equation} \label{eq:prods}
d_{2 p,0}^{(j)} = \prod_{\ell=0}^{j-1} (4 \ell + 4 p) = 4^j \dfrac{(j+p-1)!}{(p-1)!}, \qquad 
d_{2 p + 1,0}^{(j)} = \prod_{\ell=0}^{j-1} 2 (2 \ell + 1 + 2 p) = \dfrac{(2 j + 2 p)!\, p!}{(2 p)!\, (j+p)!}.
\end{equation}
Since Theorem \ref{thm:gen_counts} requires that $j_G \le j_0 = \beta - 1$, we obtain 
\[
j_G \le \beta - 1 \Longleftrightarrow \left\{\begin{matrix} 0 \le 2 g - 1 & \text{for } z_g \\ 1 \le 2 g - 2 & \text{for } e_g \end{matrix} \right. \Longleftrightarrow \left\{\begin{matrix} g \ge 1/2 & \text{for } z_g \\ g \ge 3/2 & \text{for } e_g \end{matrix} \right. .
\]
Consequently, the method described in this section provides closed form expressions for ${\mathcal N}_{4,z}(g,j)$ when $g \ge 1$ (see Table \ref{tab:counts_z}), and for ${\mathcal N}_{4,e}(g,j)$ when $g \ge 2$ (Table \ref{tab:counts_e}). 

\subsection{Map count for $g=1$}
For $g=1$, the method only applies to ${\mathcal N}_{4,z}(1,j)$ (see Appendix \ref{app:e1} for a calculation of ${\mathcal N}_{4,e}(1,j)$ in terms of hypergeometric functions). Then $\alpha = 3 g - 1 = 2$, $\beta = 2 g = 2$, and $s = \alpha + \beta = 4$. The row vector ${\mathcal R}^{(j)}$, which we denote by ${\mathcal R}_0^{(j)}$ to emphasize that the products in $d_m^{(j)}$ start at $j_G = 0$, may be truncated to its first $s=4$ entries, leading to
\[
{\mathcal R}_0^{(j)}[1,2,3,4] = \left[d_1^{(j)},  \dfrac{d_1^{(j)}+d_2^{(j)}}{2},  \dfrac{d_1^{(j)} + 2 d_2^{(j)} + d_3^{(j)}}{4},  \dfrac{d_1^{(j)} + 3 d_2^{(j)} + 3 d_3^{(j)} + d_4^{(j)}}{8} \right].
\]
The rational function $z_1$ is given by \cite{bib:emp08}
\[
z_{1}(z_0) = 
\frac{z_{0} \left(z_{0}-1\right) \left(\displaystyle -\frac{2}{3}+\frac{2 z_{0}}{3}\right)}{\left(2-z_{0}\right)^{4}} = z_0 \left(\dfrac{2/3}{(2-z_0)^2}+ \dfrac{-4/3}{(2-z_0)^3} + \dfrac{2/3}{(2-z_0)^4}\right).
\]
Since $j_0 = \beta - 1 = 1$, we have $j_0 > j_G = 0$ and, as explained in Remark \ref{rem:jG_less_j0},  we need to augment the initial set of 3 coefficients in the partial fraction expansion of $z_1(z_0)/z_0$ with $j_0 - j_G = 1$ zero, on the left. The number ${\mathcal N}_{4,z}(1,j)$ is obtained by contracting ${\mathcal R}_0^{(j)}[1,2,3,4]$ with
\[
V_{z,1}^{(0)} = \displaystyle \frac{1}{3} [0, 2, -4, 2]^T.
\]
Specifically,
\begin{align*}
{\mathcal N}_{4,z}(1,j) &= c_2^{j-j_G} {\mathcal R}_0^{(j)}[1,2,3,4] \cdot V_{z,1}^{(0)} \\ 
& = c_2^j \left(\dfrac{2}{3} \dfrac{d_1^{(j)}+d_2^{(j)}}{2} - \dfrac{4}{3} \dfrac{d_1^{(j)} + 2 d_2^{(j)} + d_3^{(j)}}{4} + \dfrac{2}{3} \dfrac{d_1^{(j)} + 3 d_2^{(j)} + 3 d_3^{(j)} + d_4^{(j)}}{8}\right)\\
&= \dfrac{12^{j}}{12} \left(d_1^{(j)} - d_2^{(j)} - d_3^{(j)} + d_4^{(j)} \right).
\end{align*}
Since
\[
d_1^{(j)} = \dfrac{(2j)!}{j !},
\quad d_2^{(j)} = 4^j\, j!,
\quad d_3^{(j)} = \dfrac{(2j+2)!}{2 (j+1)!},
\quad d_4^{(j)} = 4^j (j+1)!
\]
we obtain 
\[
d_1^{(j)} - d_3^{(j)} = - 2 j \dfrac{(2j)!}{j !}, \qquad d_4^{(j)} - d_2^{(j)} = 4^j\, j\, j!
\]
and thus
\[
{\mathcal N}_{4,z}(1,j)= 12^j\, j \left(\dfrac{4^j j!}{12} -  \dfrac{(2j)!}{6 j!}\right)
\]
for $j \ge 1$, as indicated in Table \ref{tab:counts_z}. 

\subsection{Map counts for $2 \le g \le 7$}\label{sec:mc2-7}
For $g \ge 2$, the method applies to both ${\mathcal N}_{4,z}(g,j)$ and ${\mathcal N}_{4,e}(g,j)$ for all values of $j \ge 1$. As previously indicated, an explicit formulation requires knowledge of $z_g(z_0)$ and $e_g(z_0)$. In \cite{bib:elt22b}, we derived these functions for genera up to $g = 7$. In all cases, $j_0 > j_G$ and the initial vector $V^{(j_G)}$, of size $s = \alpha + \beta$, has its first $j_0 - j_G$ entries equal to zero. We list these vectors in Appendices \ref{app:zg} (for ${\mathcal N}_{4,z}(g,j)$) and \ref{app:eg} (for ${\mathcal N}_{4,e}(g,j)$). The  result of contracting each of these vectors with either ${\mathcal R}_0^{(j)}$ (when $j_G = 0$) or ${\mathcal R}_1^{(j)}$ (when $j_G = 1$), truncated to length $s$, was subsequently simplified, with the help of Maple \cite{bib:maple}, into the expressions provided in Tables \ref{tab:counts_z} and \ref{tab:counts_e}.

\begin{figure}
\centering
\includegraphics[width= .95\linewidth]{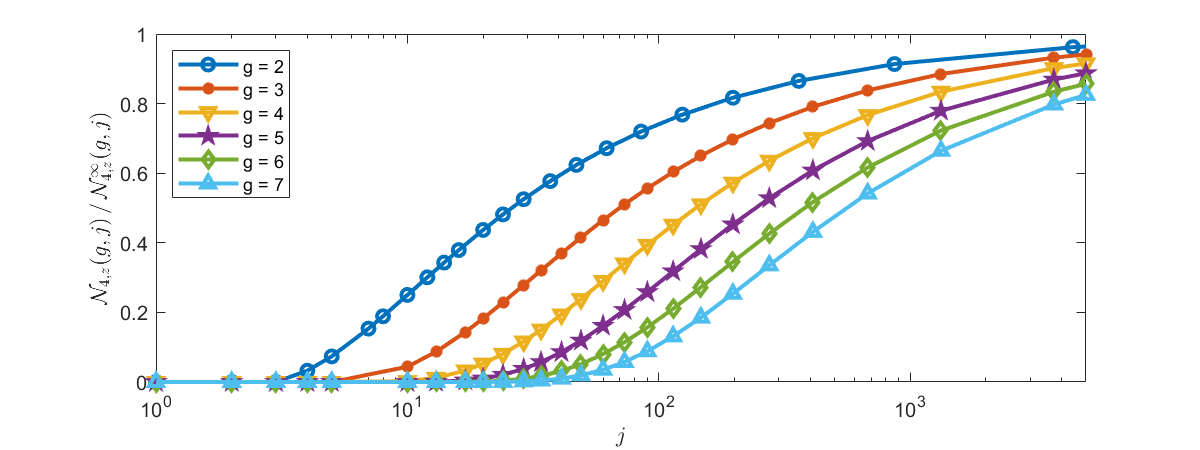}\\
\includegraphics[width= .95\linewidth]{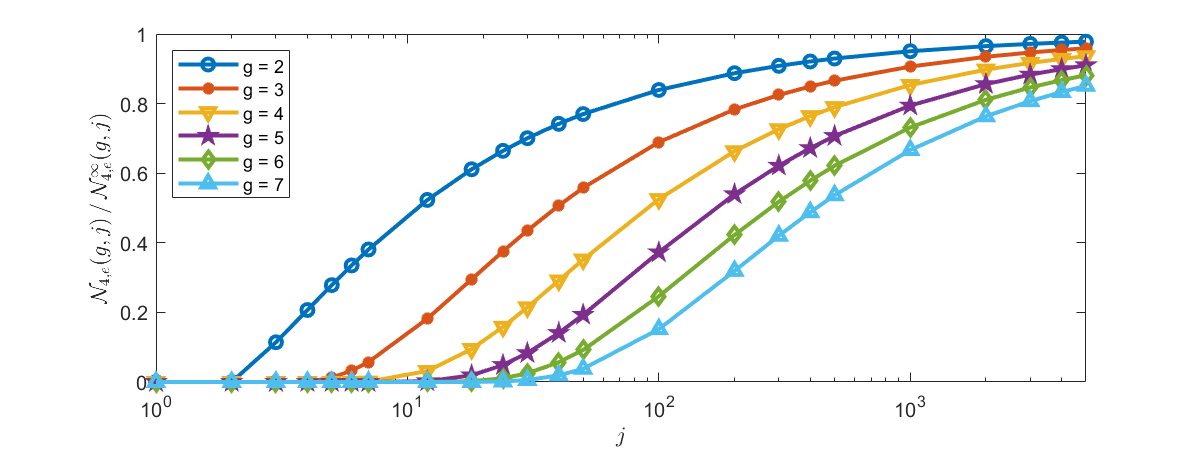}
\caption{Top: Convergence of the ratios ${\mathcal N}_{4,z}(g,j)\, / \,{\mathcal N}_{4,z}^\infty(g,j)$ towards $1$ as $j \to \infty$. Bottom: Similar plot for ${\mathcal N}_{4,e}(g,j)\, / \,{\mathcal N}_{4,e}^\infty(g,j)$. The asymptotic term ${\mathcal N}_{4,z}^\infty(g,j)$ is defined in Equation \eqref{zg-asymp} and ${\mathcal N}_{4,e}^\infty(g,j)$ is given in Equation \eqref{eg-asymp}.}
\label{fig:convergence}
\end{figure}
The top panel of Figure \ref{fig:convergence} illustrates the convergence of ${\mathcal N}_{4,z}(g,j)$, as given in Table \ref{tab:counts_z}, towards its dominant term ${\mathcal N}_{4,z}^\infty(g,j)$ as $j \to \infty$, for different values of $g \ge 2$. Here, ${\mathcal N}_{4,z}^\infty(g,j)$ is defined by
\[
{\mathcal N}_{2\nu,z}(g,j)= {\mathcal N}_{2\nu,z}^\infty(g,j) \ (1 + o(1)) \quad \text{as} \quad j \to \infty
\]
where
\begin{equation}
\label{zg-asymp}
{\mathcal N}_{2\nu,z}^\infty(g,j) = j!\, \frac{\nu}{\nu-1} \frac{a_{z,3g-1}^{(0)}(g,\nu) }{ \left(\sqrt{2\nu / (\nu - 1)}\right)^{5g-1}} \dfrac{j^{\frac{5g-3}{2}}}{\Gamma\left(\frac{5g-1}{2} \right)}
s_c^{-j}, \quad 
s_c = \frac{(\nu - 1)^{\nu -1}}{c_\nu\, \nu^\nu},
\end{equation}
and $c_\nu$ is given in Equation \eqref{eq:dz_dt}. The above asymptotic formula was established in \cite{bib:er}\footnote{Unfortunately, the term $\sqrt{2\nu/(\nu-1)}$ in the denominator of ${\mathcal N}_{2\nu,z}^\infty(g,j)$ was incorrectly written as $ \sqrt{2\nu}\, (\nu - 1)$, both in \cite{bib:er} and \cite{bib:ew}. A similar typo appears in the expression for ${\mathcal N}_{2\nu,e}^\infty(g,j)$ given in \cite{bib:ew}.}. A similar result for ${\mathcal N}_{2\nu,e}(g,j)$ appeared in \cite{bib:ew}:
\[
{\mathcal N}_{2\nu,e}(g,j) = {\mathcal N}_{2\nu,e}^\infty(g,j)\ (1+ o(1)) \quad \text{as} \quad j \to \infty,
\]
where
\begin{equation}
\label{eg-asymp}
{\mathcal N}_{2\nu,e}^\infty(g,j) = j!\, \frac{b^{(0)}_{e,3g-3}(g,\nu)}{\left(\sqrt{2 \nu /(\nu-1)}\right)^{5g-5}} \frac{j^{\frac{5g-7}{2}}}{\Gamma\left( \frac{5g-5}{2}\right)} s_c^{-j}, \qquad
b_{e,3g-3}^{(0)}(g,\nu) = \frac{a_{z,3g-1}^{(0)}(g,\nu)}{(5g-5)(5g-3) \nu^2}  > 0.
\end{equation}
The behavior of the ratio ${\mathcal N}_{2\nu,e}(g,j)\, / \, {\mathcal N}_{2\nu,e}^\infty(g,j)$ is displayed in the bottom panel of Figure \ref{fig:convergence}, showing convergence towards $1$ for large values of $j$.

\subsection{Reconciling the two formulations}
We can compare the general formulation of ${\mathcal N}_{2\nu,z}(g,j)$ given in Section \ref{sec:2} with the result of Theorem \ref{thm:gen_counts}, specific to the $\nu = 2$ case, since both involve the coefficients $a_{z,\ell}^{(0)}$.
\begin{align*}
{\mathcal N}_{4,z}(g,j) &= j!\, c_\nu^j \sum_{\ell = 0}^{3g-1} a_{z,\ell}^{(0)} {(2g-2) + (\ell+j)  \choose j} \ {_2}F_1 \left( \genfrac{}{}{0pt}{}
{-j,\ -2 j}{2-2g-(\ell+j)}; -1 \right)\\
&= c_\nu^j \sum_{\ell=1}^{5 g - 1} {\mathcal R}_0^{(j)}[\ell] \ X^{(0)}[\ell] = c_\nu^j \sum_{\ell = 2 g}^{5 g - 1} a_{z,\ell-2g}^{(0)} \ {\mathcal R}_0^{(j)}[\ell] = c_\nu^j \sum_{\ell = 0}^{3 g - 1} a_{z,\ell}^{(0)} \ {\mathcal R}_0^{(j)}[\ell+2 g].\\
\Longrightarrow 0 & = 
\sum_{\ell=0}^{3 g - 1} a_{z,\ell}^{(0)} \left(j!\, {(2g-2) + (\ell+j)  \choose j} \ {_2}F_1 \left( \genfrac{}{}{0pt}{}
{-j,\ -2 j}{2-2g-(\ell+j)}; -1 \right) \right. \\*
& \left. \qquad \qquad \qquad - \dfrac{1}{2^{\ell+2g-1}} \sum_{k=1}^{\ell+2 g} {\ell + 2 g - 1 \choose k-1} \prod_{m=0}^{j-1} 2 (2 m + k)
\right).
\end{align*}
Although equality of the two linear combinations here does not require equality of their coefficients, we checked that the following identity was true for a range of integer values of $\ell$, $g$, and $j$, specifically $0 \le \ell \le 20$ and $1 \le g,\, j \le 20$. We leave this statement as a conjecture for future analysis.
\begin{conjecture}
For integer values of $g \ge 1$, $\ell \ge 0$, and $j \ge 1$, the following identity is true
\[
j!\, 2^{\ell + 2 g - 1} {(2g-2) + (\ell+j)  \choose j} \ {_2}F_1 \left( \genfrac{}{}{0pt}{}
{-j,\ -2 j}{2-2g-(\ell+j)}; -1 \right)
= \sum_{k=1}^{\ell+2 g} {\ell + 2 g - 1 \choose k-1} \prod_{m=0}^{j-1} 2 (2 m + k).
\]
\end{conjecture}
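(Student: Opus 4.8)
The plan is to prove the identity by recognizing both sides as the same polynomial-in-$j$ quantity generated by the recurrence of Theorem~\ref{thm:M}, thereby bypassing any direct manipulation of the hypergeometric series. The key observation is that, by the derivation preceding Theorem~\ref{thm:M}, the vector $\mathcal{R}^{(j)}$ of column sums of the composed operator $M^{(j)}\cdots M^{(1)}$ has entries
\[
\mathcal{R}^{(j)}[n] = \frac{1}{2^{n-1}} \sum_{k=1}^{n} \binom{n-1}{k-1} \prod_{\ell=0}^{j-1} 2(2\ell+k),
\]
which is exactly $2^{1-n}$ times the right-hand side of the conjectured identity with $n = \ell + 2g$. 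On the other hand, by Theorem~\ref{th:counts_hypergeom}, contracting $\mathcal{R}^{(j)}$ against a standard basis vector $e_n$ and tracking which $a^{(0)}_{z,\ell}$ it multiplies shows that $\mathcal{R}^{(j)}[n]$ must equal the coefficient of $a^{(0)}_{z,n-2g}$ in the hypergeometric formula divided by $j!\, c_\nu^j$ — provided one can carry out the ``single-coefficient'' comparison the conjecture sidesteps. So the first step is to set up an abstract version of Theorem~\ref{th:counts_hypergeom} in which $z_g$ is replaced by the single rational function $z_0\,(\nu-(\nu-1)z_0)^{-(2g+\ell)}$, i.e. to prove the identity for the ``atomic'' building blocks and then invoke linearity.

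The second step is therefore to establish, for a single value of $n \ge 1$ and $\nu = 2$, the formula
\[
(-1)^j \frac{d^j}{dt^j}\!\left[\frac{z_0}{(2-z_0)^{n}}\right]_{t=0}
= \frac{c_2^j\, j!}{2^{n-1}} \sum_{k=1}^{n} \binom{n-1}{k-1}\prod_{m=0}^{j-1} 2(2m+k),
\]
and independently the Cauchy-integral evaluation of the same derivative that produces the hypergeometric $\,{_2}F_1$ term. The left equality follows directly from the recurrence \eqref{eq:rec} applied to the single-term partial fraction (this is essentially the content of Theorem~\ref{thm:M} restricted to the initial vector $e_n$), while the right equality is the atomic case of the contour-integral computation carried out in Appendix~\ref{app:D}; both are already implicit in the machinery of the paper. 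Equating them gives the identity with $n = \ell + 2g$, and since $n$ ranges over all integers $\ge 1$ while $\ell \ge 0$ and $g \ge 1$ only constrain $n \ge 2$, a separate check of the cases $n = 1$ (equivalently $g = 1/2$, handled formally, or simply verified directly since both sides are explicit) completes all $n$.

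Alternatively — and this is the route I would actually pursue to keep the proof self-contained — one can prove the identity by induction on $j$ using only the recurrence \eqref{eq:rec} with $\nu = 2$, $\beta$ replaced by the abstract starting index $n$: show that \emph{both} sides, viewed as functions of $j$, satisfy the same first-order recurrence in $j$ together with the same value at $j=0$ (both equal $1$ when $j=0$). For the right-hand side this is the statement that $\mathcal{R}^{(j)}[n]$ is obtained from $\mathcal{R}^{(j-1)}$ by one application of $M^{(j)}$ — a finite combinatorial computation on binomial sums and shifted products that the paper has effectively already done. For the left-hand side one differentiates the hypergeometric expression once more in $t$ using \eqref{eq:dz_dt} and checks that the Gauss contiguous relations for $\,{_2}F_1$ reproduce the same recurrence; the factor $2^{\ell+2g-1}$ and the binomial prefactor conspire exactly so that the parameter shifts $-j \mapsto -(j{+}1)$ and $2-2g-(\ell{+}j) \mapsto 1-2g-(\ell{+}j)$ match. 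The main obstacle is precisely this last verification: matching the contiguous-relation recursion for $\,{_2}F_1(-j,-2j;2-2g-\ell-j;-1)$ against the two-term recurrence \eqref{eq:rec} is bookkeeping-heavy, because the argument $-1$ is a ``degenerate'' value where several contiguous relations collapse, and one must be careful with the limiting convention for the negative-integer denominator parameter spelled out after Theorem~\ref{th:counts_hypergeom}. Everything else — linearity, the $j=0$ base case, and the reduction to a single atomic term — is routine.
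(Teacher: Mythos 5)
The paper does not actually prove this statement: it is recorded as a conjecture, supported only by numerical checks for $0\le\ell\le 20$ and $1\le g,j\le 20$, precisely because the authors note that equality of the two linear combinations $\sum_\ell a^{(0)}_{z,\ell}(\cdots)$ does not force equality of the individual coefficients. Your first route closes exactly that gap, and it is sound: both of the paper's derivations are linear in the partial-fraction coefficients, and each applies verbatim to the single ``atomic'' function $G(z_0)=z_0\,(2-z_0)^{-n}$ with $n=\ell+2g$. This $G$ has the form \eqref{eq:df} with $\alpha=0$, $\beta=n$, $j_G=0$, so the hypothesis $j_G\le j_0=\beta-1$ of Theorem \ref{thm:gen_counts} holds, the initial vector is the standard basis vector $e_n$ of length $s=n$, and contraction with ${\mathcal R}_0^{(j)}$ gives $(-1)^j\,d^jG/dt^j\vert_{t=0}=c_2^j\,{\mathcal R}_0^{(j)}[n]$; meanwhile the computation in Appendix \ref{app:D} is already carried out term by term, so it yields $(-1)^j\,d^jG/dt^j\vert_{t=0}=j!\,c_2^j\binom{n-2+j}{j}\,{_2}F_1(-j,-2j;2-n-j;-1)$ for the same atomic $G$. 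Equating and multiplying by $2^{n-1}$ is the conjectured identity, which indeed depends on $(g,\ell)$ only through $n=\ell+2g\ge 2$. So you have a genuine proof of something the paper leaves open, and your second (contiguous-relations) route is not needed.

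Two corrections to the write-up. First, your displayed atomic formula carries a spurious factor of $j!$: the vector-dynamics side is $c_2^j\,2^{1-n}\sum_{k}\binom{n-1}{k-1}\prod_m 2(2m+k)$ with no $j!$ (for $n=2$, $j=2$ the derivative equals $22\,c_2^2$, which is $\tfrac{c_2^2}{2}(12+32)$, not $2!$ times that). The $j!$ lives only on the hypergeometric side, and this asymmetry is precisely what produces the $j!$ on the left of the conjectured identity; with your extra factor the two evaluations would not match the stated identity, so the bookkeeping needs to be redone carefully. Second, drop the $n=1$ case rather than ``verify it directly'': there $\binom{n-2+j}{j}=\binom{j-1}{j}=0$ while the right-hand sum equals $(2j)!/j!\ne 0$, so the identity as literally written degenerates at $n=1$ (the packaging in Appendix \ref{app:D} divides by $\binom{(2g-2)+(\ell+j)}{j}$, which vanishes when $2g+\ell=1$). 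Since the conjecture only concerns $n\ge 2$, this costs nothing.
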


\section{Conclusions} \label{sec:7}

In this paper we have considered the problem of finding explicit closed-form expressions of map counts on a surface of genus $g$. Such expressions enable one to immediately read off the desired counts for maps with $j$ vertices ($j$ arbitrary) without any iterative calculations required. This is what we have referred to as non-recursive
counts. We have explicitly solved this problem in the case of 4-valent maps, for genera $g \le 7$, as demonstrated in Tables \ref{tab:counts_z} and \ref{tab:counts_e}. As mentioned in the introduction, examples of such non-recursive counts did already exist in the literature, but for rather low values of the genus ($g \leq 3$). 

We have presented two different approaches to formulating the map counts. The first, which applies Cauchy's integration formula to the relevant generating function, led to an explicit expression for ${\mathcal N}_{2\nu,e}(1,j)$, for arbitrary even valence. This approach also provided general expressions (regardless of the value of $g$) for counts of $2\nu$-valent maps, in terms of linear combinations of the coefficients of the partial fraction expansions of $z_g/z_0$ and $e_g$. The second approach looked at the dynamics of the coefficients of the partial fraction expansion associated with successive derivatives of the generating functions, with the order of the derivatives playing the role of time. In the case when $\nu = 2$, we were able to solve the corresponding vector difference equations in terms of a universal, genus-independent row vector: truncating this vector to a row of finite, genus-dependent length, and contracting the result with a column vector of initial partial fraction coefficients, led to the desired counts. The advantage of the second approach is that it produces expressions for the counts in terms of elementary combinatorial functions of the number of vertices $j$. The connection between these two seemingly different methods led us to conjecture an identity involving hypergeometric functions.

Both approaches are able to generate closed expressions for non-recursive counts effectively and rather quickly for comparatively high values of the genus, once the initial coefficients are known. A general framework to derive these coefficients is described in a separate manuscript \cite{bib:elt22b}, which itself builds on \cite{bib:elt22}. Consequently, the results of the present work, together with the techniques developed in \cite{bib:elt22} and \cite{bib:elt22b}, completely address the problem of establishing closed-form non-recursive formulas for the number of even-valence maps with an arbitrary number of vertices, embedded in surfaces of arbitrary genus. The remainder of this section is a brief discussion of some questions that now appear open for fruitful investigation thanks to the results of the present manuscript.

First, we note that the method for evaluating contour integrals developed here can easily be generalized to other rational functions of $z_0$. This could, for instance, be used to calculate more general correlation functions arising in random matrix theory. In addition, there are by now a number of instances of the application of perturbative methods of statistical physics to the analysis of geometric partition functions. These lead to asymptotic expansions whose coefficients are generating functions for various topological or geometric invariants. They include applications in knot theory \cite{bib:w88}, hyperbolic surface theory \cite{bib:m08}, and symplectic manifold invariants \cite{bib:k96}. It will be interesting to see if our methods may be applied, to some extent, to these other contexts. For instance, we have found strong numerical evidence that
\[
\chi(\Gamma_g) =  \sum_{\ell=0}^{3g-3} b_{e,\ell}^{(0)}
\] 
where $b_{e,\ell}^{(0)}$ are the Laurent coefficients for $e_g(z_0)$
and $\chi(\Gamma_g)$ is the orbifold Euler characteristic for the mapping class group $\Gamma_g$ (equivalently, the
moduli space of genus $g$ Riemann surfaces) \cite{bib:hz}.
In a similar vein, non-recursive counts of maps with boundaries appear in a number of places in the literature going back to Tutte \cite{bib:tu} in the case of $g=0$, with more recent examples for $g \leq 2$ being provided in \cite{bib:ebook}. Such enumerations play a role in recent work on Jackiw-Teitelboim quantum gravity \cite{bib:sw} and our methods may be relevant to these questions as well.

Second, it is natural to ask how much of the analysis of the dynamics of the vector $V^{(j)}$ carried out in Sections \ref{sec:rec_relations} and \ref{sec:4-valent} can be continued to the case when $\nu \ne 2$. Certainly the ordered product representation, displayed in Theorem \ref{thm:M}, of the solution to the vector difference equation is quite general. The obstacle arises in the absence of an evident reduction of the dynamics to a vector of fixed length when $\nu \ne 2$. Furthermore, and this is partially a consequence of the prior obstacle, one does not in general have a simultaneous diagonalization of the operators in the ordered product representation. However, our investigations, which are beyond the scope of the present manuscript, show that one can diagonalize each factor of that product. The result is a product in which the respective diagonalizations are interlaced by a common operator that turns out to be an explicit Toeplitz operator.

In a slightly different direction, one can consider the case of odd valence \cite{bib:bd13, bib:ep}. Structures analogous to what we have studied for even valence indeed exist for triangulations (dual to trivalent maps). In particular, rational expressions for $z_g$ and $e_g$ as functions of $z_0$ are available. In \cite{bib:elt22b} these structures are used to calculate map counts. As mentioned in the introduction, non-recursive map counts in the 3-valent case are known for $g=0$ and $g=1$ \cite{bib:bd13}. It is a natural goal to extend this to higher genus and we intend to consider these questions in future work. 

\appendix

\section{Non-recursive counts of $e_1$} \label{app:e1}
In this appendix, we provide a direct evaluation of ${\mathcal N}_{2\nu,e}(1,j)$ starting from the contour integral representation given in \cite{bib:er},
\begin{equation} \label{e1_count}
   {\mathcal N}_{2\nu,e}(1,j) =
   \frac{j!\, c_\nu^j}{2\pi i}
   \oint \frac{e_1(\eta)}{\eta^{j+1}} d\eta,
\end{equation}
where the integral is taken on a small loop surrounding the origin, $e_1(\eta) = -\dfrac{1}{12} \log\left(\nu - (\nu-1) z_0\right)$ and the relation between $\eta$ and $z_0$ is given by the string equation
\begin{equation*}
    1 = z_0 - \eta z_0^\nu. 
\end{equation*}
\begin{theorem} The number of $2 \nu$-regular maps with $j \ge 1$ vertices that can be embedded in a surface of genus 1 is given by
\begin{eqnarray*}
{\mathcal N}_{2\nu,e}(1,j) &=&
\dfrac{j!\, c_\nu^j}{12}
\left( (\nu - 1) {\nu j-1 \choose j-1}\, {_3}F_2 \left( \genfrac{}{}{0pt}{}
{1,\ 1,\ 1-j}{2,(\nu-1)j+1}; 1-\nu \right) \right. \nonumber \\
&& \qquad \quad \left. - (\nu - 1)^2 {\nu j-1 \choose j-2}\ {_3}F_2
\left( \genfrac{}{}{0pt}{}
{1,\ 1,\ 2-j}{2,(\nu-1)j+2}; 1-\nu \right)\right),
\end{eqnarray*}
where ${_3}F_2$ is the generalized hypergeometric function \cite{bib:nist}.
\end{theorem}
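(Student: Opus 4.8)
The plan is to evaluate the contour integral \eqref{e1_count} directly by expanding $e_1(\eta) = -\tfrac{1}{12}\log(\nu-(\nu-1)z_0)$ as a function of $z_0$, changing variables from $\eta$ to $z_0$ via the string equation, and reading off the residue at the origin. First I would write $\log(\nu-(\nu-1)z_0) = \log\nu + \log\!\left(1 - \tfrac{\nu-1}{\nu}z_0\right)$; the constant $\log\nu$ contributes nothing to the contour integral since it has no pole, so only $\log\!\left(1-\tfrac{\nu-1}{\nu}z_0\right)$ matters. Near $t=0$ we have $z_0 = 1$, so it is cleaner to substitute $z_0 = 1 + w$ and track the dependence on $w$; alternatively, one keeps $z_0$ and performs the change of variables in the integral.

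The key computational step is the change of variables $\eta \mapsto z_0$. From the string equation $\eta = (1-z_0)/z_0^\nu = z_0^{-\nu} - z_0^{-\nu}z_0 \cdot \dots$, wait—more precisely $\eta = (1-z_0)z_0^{-\nu}$, so $d\eta = \left(-z_0^{-\nu} - \nu(1-z_0)z_0^{-\nu-1}\right)dz_0 = z_0^{-\nu-1}\left(-z_0 - \nu(1-z_0)\right)dz_0 = z_0^{-\nu-1}\left((\nu-1)z_0 - \nu\right)dz_0 = -z_0^{-\nu-1}\left(\nu - (\nu-1)z_0\right)dz_0$. Then
\[
\frac{1}{2\pi i}\oint \frac{e_1(\eta)}{\eta^{j+1}}\,d\eta = \frac{-1}{12}\cdot\frac{1}{2\pi i}\oint \frac{\log\!\left(1-\tfrac{\nu-1}{\nu}z_0\right)}{\left((1-z_0)z_0^{-\nu}\right)^{j+1}}\cdot\left(-z_0^{-\nu-1}\right)\left(\nu-(\nu-1)z_0\right)\,dz_0,
\]
which simplifies to a residue at $z_0 = 1$ (since $\eta\to 0$ corresponds to $z_0\to 1$) of a function of the form $z_0^{\nu j - 1}(1-z_0)^{-j-1}\left(\nu-(\nu-1)z_0\right)\log\!\left(1-\tfrac{\nu-1}{\nu}z_0\right)$ up to the $-\tfrac{1}{12}$ factor. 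Setting $u = 1 - z_0$, this becomes a residue at $u=0$, i.e. the coefficient of $u^j$ in $(1-u)^{\nu j - 1}\left(1 - (\nu-1)u\right)\log\!\left(\tfrac{1}{\nu} + \tfrac{\nu-1}{\nu}u\right)$, and since $\log\tfrac{1}{\nu}$ again drops out, in $(1-u)^{\nu j-1}\left(1-(\nu-1)u\right)\log\!\left(1 + (\nu-1)u\right)$.

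The next step is to extract this coefficient as a hypergeometric sum. I would expand $\log(1+(\nu-1)u) = \sum_{m\ge 1}\tfrac{(-1)^{m-1}(\nu-1)^m}{m}u^m$ and $(1-u)^{\nu j-1} = \sum_{n\ge 0}\binom{\nu j-1}{n}(-1)^n u^n$, then convolve, handling the two pieces from the factor $1-(\nu-1)u$ separately; this produces two single sums in $m$ whose summands are ratios of factorials/Pochhammer symbols. Recognizing each as a ${_3}F_2$ evaluated at argument $1-\nu$ is then a matter of matching the ratio of consecutive terms: the factor $\tfrac{1}{m}$ and the shift structure in $\binom{\nu j-1}{j-m}$ together force numerator parameters $1,1,1-j$ (resp. $1,1,2-j$) and denominator parameters $2,(\nu-1)j+1$ (resp. $2,(\nu-1)j+2$), with the binomial prefactors $\binom{\nu j-1}{j-1}$ and $\binom{\nu j-1}{j-2}$ emerging as the $m=1$ terms. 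Multiplying by $j!\,c_\nu^j$ and the $-\tfrac{1}{12}$ (which cancels a sign against the $(-1)^{m-1}$ reindexing) gives the claimed formula.

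I expect the main obstacle to be the bookkeeping in the third step: organizing the convolution so that the two ${_3}F_2$'s come out cleanly with exactly the stated parameters and prefactors, keeping all signs, the $(-1)^j$ implicit in the definition of the count, and the convention for terminating hypergeometric series with negative integer denominator parameters consistent. The contour-integral-to-residue reduction is routine once the change of variables is set up, and the convergence/analyticity needed to justify it is already guaranteed by the results of \cite{bib:em} cited earlier; verifying that the contour in the $\eta$-plane maps to an admissible small loop around $z_0 = 1$ is a minor point that I would address by noting $\eta$ is a local biholomorphism there, since $d\eta/dz_0 = -z_0^{-\nu-1}(\nu-(\nu-1)z_0)$ is nonzero at $z_0=1$.
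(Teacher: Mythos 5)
Your proposal follows essentially the same route as the paper's Appendix~A proof: change variables from $\eta$ to $z_0$ via the string equation, reduce to a residue at $z_0=1$ by a linear substitution, expand the binomial factor and the logarithm as power series, convolve, and repackage the two resulting finite sums as ${_3}F_2$'s via Pochhammer manipulations (the paper uses $u=z_0-1$ rather than $u=1-z_0$, which is immaterial). One transcription slip to fix in your write-up: with your substitution $u=1-z_0$ the Jacobian factor $\nu-(\nu-1)z_0$ becomes $1+(\nu-1)u$, not $1-(\nu-1)u$, but this is exactly the sign bookkeeping you already flag as the remaining work, and the argument goes through.
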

\begin{proof}
Using the string equation, the contour integral \eqref{e1_count} is rewritten as
\begin{eqnarray*}
&&  -\dfrac{j!\, c_\nu^j}{12} \frac{1}{2\pi i}
   \oint \frac{\log(\nu - (\nu - 1) z) (\nu - (\nu - 1) z)}{(z-1)^{j+1}} z^{\nu j - 1} dz\\
   &=& -\dfrac{j!\, c_\nu^j}{12} \frac{1}{2\pi i}
   \oint \frac{\log(1- (\nu - 1) u) (1 - (\nu - 1) u)}{u^{j+1}} (1+u)^{\nu j - 1} du \\
   &=& -\dfrac{j!\, c_\nu^j}{12} \sum_{m=0}^{\nu j -1} {\nu j - 1 \choose m} \frac{1}{2\pi i}
   \oint \log(1- (\nu - 1) u) (1 - (\nu - 1) u) u^{m - j -1} du \\
   &=& -\dfrac{j!\, c_\nu^j}{12} \sum_{m=0}^{\nu j -1} {\nu j - 1 \choose m} \frac{1}{2\pi i}
   \oint \log(1- w) (1 - w) \frac{w^{m - j -1}}{(\nu-1)^{m-j}} dw \\
   &=& \dfrac{j!\, c_\nu^j}{12} \sum_{m=0}^{\nu j -1} \frac1{(\nu -1)^{m-j}}{\nu j - 1 \choose m} \frac{1}{2\pi i}
   \oint (1 - w) \sum_{k=1}^\infty  \frac{w^k}{k} 
   w^{m - j -1} dw \\
   &=& \dfrac{j!\, c_\nu^j}{12} \sum_{m=0}^{\nu j -1}\sum_{k=1}^\infty \frac1{k (\nu -1)^{m-j}}{\nu j - 1 \choose m} \frac{1}{2\pi i}
   \oint   (1 - w)
   w^{k+ m - j -1} dw \\
   &=& \dfrac{j!\, c_\nu^j}{12} \sum_{m=0}^{\nu j -1}\sum_{k=1}^\infty \frac1{k (\nu -1)^{m-j}}{\nu j - 1 \choose m}
   \left( \delta_{k, j-m}\Big|_{m < j} - \delta_{k, j-m-1}\Big|_{m < j-1} \right)\\
   &=& \dfrac{j!\, c_\nu^j}{12} \left( \sum_{m=0}^{ j -1}
   \frac1{(\nu -1)^{m-j}}
   \frac{1}{j-m} {\nu j - 1 \choose m} - \sum_{m=0}^{j-2}
   \frac1{(\nu -1)^{m-j}} \frac{1}{j-m-1} 
   {\nu j - 1 \choose m} \right)\\
   &=& \dfrac{j!\, c_\nu^j}{12} \left( \sum_{m=0}^{ j -1}
   \frac1{(\nu -1)^{-m-1} }
   \frac{1}{m+1} {\nu j - 1 \choose j-m-1} - \sum_{m=0}^{j-2}
   \frac1{(\nu -1)^{-m-2} } \frac{1}{m+1} 
   {\nu j - 1 \choose j-m-2} \right)
\end{eqnarray*}
In the second line we have made the substitution $u = z - 1$. In the third line we apply the binomial expansion since $j \ge 1$ and in the fifth line the logarithm is expanded in its Taylor series since it is analytic in a small neighborhood of $w = 0$ containing the contour. In the last line, the summation is reversed. Rewriting
\begin{align*}
{\nu j - 1 \choose j-m-1} & = \dfrac{(\nu j - 1)!}{(j-1)!\, (\nu j-j)!} \dfrac{(j-1)^{\underline{m}}}{(\nu j - j + 1)^{\overline{m}}} = {\nu j - 1 \choose j-1} \dfrac{(j-1)^{\underline{m}}}{((\nu-1) j + 1)^{\overline{m}}}\\
{\nu j - 1 \choose j-m-2} & = \dfrac{(\nu j - 1)!}{(j-2)!\, (\nu j-j+1)!} \dfrac{(j-2)^{\underline{m}}}{(\nu j - j + 2)^{\overline{m}}} = {\nu j - 1 \choose j-2} \dfrac{(j-2)^{\underline{m}}}{((\nu -1) j + 2)^{\overline{m}}},
\end{align*}
where $x^{\overline{m}} = x (x+1) \cdots (x+m-1)$ is the rising factorial and $x^{\underline{m}} = x (x-1) \cdots (x-m+1)$ is the falling factorial of $x$, we have
\begin{align*}
{\mathcal N}_{2\nu,e}(1,j) = &\dfrac{j!\, c_\nu^j}{12} \left( \sum_{m=0}^{ j -1}
   \frac1{(\nu -1)^{-m-1} }
   \frac{1}{m+1} {\nu j - 1 \choose j-1} \dfrac{(j-1)^{\underline{m}}}{((\nu-1) j + 1)^{\overline{m}}} \right. \\
& \left. \qquad \qquad - \sum_{m=0}^{j-2}
   \frac1{(\nu -1)^{-m-2} } \frac{1}{m+1} 
   {\nu j - 1 \choose j-2} \dfrac{(j-2)^{\underline{m}}}{((\nu-1) j + 2)^{\overline{m}}} \right)\\
= & \dfrac{j!\, c_\nu^j}{12} \left( {\nu j - 1 \choose j-1} \sum_{m=0}^{ j -1}(\nu -1)^{m+1} \frac{m!}{(m+1)!}  \dfrac{(j-1)^{\underline{m}}}{((\nu-1) j + 1)^{\overline{m}}} \right. \\
& \left. \qquad \qquad - {\nu j - 1 \choose j-2} \sum_{m=0}^{j-2} (\nu -1)^{m+2} \frac{m!}{(m+1)!} \dfrac{(j-2)^{\underline{m}}}{((\nu-1) j + 2)^{\overline{m}}}  \right)\\
= & \dfrac{j!\, c_\nu^j}{12} \left( {\nu j - 1 \choose j-1} \sum_{m=0}^{\infty} \frac{(1)^{\overline{m}}\, (1)^{\overline{m}}}{(2)^{\overline{m}}}  \dfrac{(1-j)^{\overline{m}}}{((\nu-1) j + 1)^{\overline{m}}} \dfrac{-(1-\nu)^{m+1}}{m!}\right. \\
& \left. \qquad \qquad - {\nu j - 1 \choose j-2} \sum_{m=0}^{\infty} \frac{(1)^{\overline{m}}\,(1)^{\overline{m}}}{(2)^{\overline{m}}\,} \dfrac{(2-j)^{\overline{m}}}{((\nu-1) j + 2)^{\overline{m}}} \dfrac{(1-\nu)^{m+2}}{m!} \right)\\
= &
\dfrac{j!\, c_\nu^j}{12}
\left( (\nu - 1) {\nu j-1 \choose j-1}\, {_3}F_2 \left( \genfrac{}{}{0pt}{}
{1,\ 1,\ 1-j}{2,(\nu-1)j+1}; 1-\nu \right) \right. \nonumber \\
& \qquad \quad \left. - (\nu - 1)^2 {\nu j-1 \choose j-2}\ {_3}F_2
\left( \genfrac{}{}{0pt}{}
{1,\ 1,\ 2-j}{2,(\nu-1)j+2}; 1-\nu \right)\right),
\end{align*}
where ${_3}F_2$ is the generalized hypergeometric function. In the penultimate line of the above calculation, it is understood that the ratios in the infinite sums are set to zero when their numerator vanishes through the rising factorial $(1-j)^{\overline{m}}$ or $(2-j)^{\overline{m}}$. Maple \cite{bib:maple} uses the same convention when evaluating these functions.
\end{proof}

\section{Contour Integral Representations in terms of Hypergeometric Functions} \label{app:D}
In this appendix, we use partial fraction expansions of the rational forms of $z_g$ and $e_g$ as functions of $z_0$ to express ${\mathcal N}_{2\nu,z}(g,j)$ and ${\mathcal N}_{2\nu,e}(g,j)$ (for $g > 1$) in terms of hypergeometric functions. We first consider the case of $z_g$ and make use of Equation \eqref{eq:zg_parfrac}, written here as
\[
z_g (z_0) = z_0 \sum_{\ell = 0}^{3 g-1} \dfrac{a_\ell^{(0)}(g,\nu)}{\left(\nu - (\nu-1) z_0\right)^{2g+\ell}}, \qquad a_\ell^{(0)}(g,\nu) \in \mathbb{R}.
\]
where we have emphasized the $g$- and $\nu$-dependence of the $a_\ell^{(0)}(g,\nu)$ coefficients. For clarity, this dependence is dropped in the calculations below.
\begin{eqnarray*}
{\mathcal N}_{2\nu,z}(g,j) &=& 
\dfrac{j!\, c_\nu^j}{2\pi i}\oint \dfrac{z_g(\eta)}{\eta^{j+1}} d\eta\\
&=& \dfrac{j!\, c_\nu^j}{2\pi i} \oint \frac{(\nu - (\nu-1) z) }{(z-1)^{j+1}} z^{\nu j} 
\frac{z_g}{z} dz\\
&=& j!\, c_\nu^j \sum_{\ell = 0}^{3g-1} a_\ell^{(0)}\frac1{2\pi i} \oint \frac{(\nu - (\nu-1) z) }{(z-1)^{j+1} (\nu - (\nu-1) z)^{2g+\ell}} z^{\nu j} dz\\
&=& j!\, c_\nu^j \sum_{\ell = 0}^{3g-1} a_\ell^{(0)}\frac1{2\pi i} \oint \frac{z^{\nu j} }{(z-1)^{j+1} (\nu - (\nu-1) z)^{2g+\ell-1}}  dz\\
&=& j!\, c_\nu^j \sum_{\ell = 0}^{3g-1} a_\ell^{(0)}\frac1{2\pi i} \oint \frac{(1+u)^{\nu j} }{u^{j+1} (1 - (\nu-1) u)^{2g+\ell-1}}  du\\
&=& j!\, c_\nu^j \sum_{\ell = 0}^{3g-1} a_\ell^{(0)}\frac1{2\pi i} \oint \frac{\sum_{n=0}^\infty\left({2g-1+\ell \choose n}\right) ((\nu-1)u)^n \sum_{m=0}^{\nu j} { \nu j \choose m} u^m}{u^{j+1} }  du\\
&=& j!\, c_\nu^j \sum_{\ell = 0}^{3g-1} a_\ell^{(0)}\sum_{n=0}^\infty (\nu-1)^n \sum_{m=0}^{\nu j}  \left({2g-1+\ell \choose n}\right)   { \nu j \choose m} \frac1{2\pi i} \oint u^{m +n - j -1}  du\\
&=& j!\, c_\nu^j \sum_{\ell = 0}^{3g-1} a_\ell^{(0)}\sum_{n=0}^\infty (\nu-1)^n \sum_{m=0}^{\nu j}  \left({2g-1+\ell\choose n}\right)   { \nu j \choose m} \delta_{n+m, j}\Big|_{m,n \geq 0}\\
&=& j!\, c_\nu^j \sum_{\ell = 0}^{3g-1} a_\ell^{(0)}  \sum_{m=0}^{ j} (\nu-1)^{j-m} \left({2g-1+\ell \choose j-m}\right) { \nu j \choose m} \\
&=& j!\, c_\nu^j \sum_{\ell = 0}^{3g-1} a_\ell^{(0)} \sum_{m=0}^{ j} (\nu-1)^{j-m} {(2g-2) + (\ell+j) -m \choose j-m}   { \nu j \choose m}\\
&=& j!\, c_\nu^j \sum_{\ell = 0}^{3g-1} a_\ell^{(0)} {(2g-2) + (\ell+j)  \choose j}  \sum_{m=0}^{ j} \frac{(\nu-1)^{j-m}  { j \choose m}}{{(2g-2) + (\ell+j) \choose m}}  { \nu j \choose m}  \\
&=& j!\, c_\nu^j (\nu-1)^{j} \sum_{\ell = 0}^{3g-1} a_\ell^{(0)} {(2g-2) + (\ell+j)  \choose j}  \sum_{m=0}^{ j} \frac{ (\nu j)^{\underline{m}}  { j \choose m}}{{(2g-2) + (\ell+j) \choose m}}  \frac{(\nu-1)^{-m}}{m!}
\\
&=& j!\, c_\nu^j (\nu-1)^{j} \sum_{\ell = 0}^{3g-1} a_\ell^{(0)} {(2g-2) + (\ell+j)  \choose j}  \sum_{m=0}^{ j} \frac{ (\nu j)^{\underline{m}}  ( j)^{\underline{m}}}{((2g-2) + (\ell+j) )^{\underline{m}}}  \frac{(\nu-1)^{-m}}{m!}\\
&=& j!\, c_\nu^j (\nu-1)^{j} \sum_{\ell = 0}^{3g-1} a_\ell^{(0)} {(2g-2) + (\ell+j)  \choose j}  \sum_{m=0}^{ \infty} \frac{ (-\nu j)^{\overline{m}}  ( -j)^{\overline{m}}}{((2-2g) -(\ell+j) )^{\overline{m}}}  \frac{(1-\nu)^{-m}}{m!}\\
&=& j!\, c_\nu^j (\nu-1)^{j} \sum_{\ell = 0}^{3g-1} a_\ell^{(0)} {(2g-2) + (\ell+j)  \choose j} 
\ {_2}F_1
\left( \genfrac{}{}{0pt}{}
{-j,\ -\nu j}{2-2g-(\ell+j)}; \frac{1}{1-\nu} \right)
\end{eqnarray*}
where ${_2}F_1$ denotes the Gauss Hypergeometric function \cite{bib:nist}. The steps of the derivation parallel those used in Appendix \ref{app:e1}, except that here we use multiset combinatorial coefficients  to expand $(1 - (\nu-1) u)^{-(2g+\ell-1)}$ in line 6.
\medskip

The contour integral derivation of hypergeometric counts for $e_g$, $g \ge 2$, follows a similar line and makes use of equation \eqref{eq:eg_parfrac}, written as
\[
e_g(z_0)=C^{(g)}+\sum_{\ell=0}^{3g-3}\dfrac{b_\ell^{(0)}(g,\nu)}{(\nu-(\nu-1) z_0)^{2g+\ell-2}}.
\]
We note that the coefficients $b_\ell^{(0)}(g,\nu)$ depend on $\nu$, whereas $C^{(g)}$ does not. For clarity, in the calculations below, we write $b_\ell^{(0)}$ instead of $b_\ell^{(0)}(g,\nu)$. Since the counts for $j=0$ are zero, we assume $j\geq 1$.

\begin{eqnarray*}
{\mathcal N}_{2\nu,e}(g,j) &=& \dfrac{j!\, c_\nu^j}{2\pi i}\oint \dfrac{e_g(\eta)}{\eta^{j+1}} d\eta\\
&=& j!\, c_\nu^j \left\{\frac{C^{(g)}}{2\pi i} \oint \dfrac{d\eta}{\eta^{j+1}}  + \sum_{\ell = 0}^{3g-3} b_\ell^{(0)}\frac1{2\pi i} \oint \frac{z^{\nu j -1} }{(z-1)^{j+1} (\nu - (\nu-1) z)^{2g+\ell-3}}  dz \right\}\\
&=& j!\, c_\nu^j \sum_{\ell = 0}^{3g-3} b_\ell^{(0)} \frac1{2\pi i} \oint \frac{(1+u)^{\nu j -1} }{u^{j+1} (1 - (\nu-1) u)^{2g+\ell-3}}  du\\
&=& j!\, c_\nu^j   \sum_{\ell = 0}^{3g-3} b_\ell^{(0)}\frac1{2\pi i} \oint \frac{\sum_{n=0}^\infty\left({2g-3+\ell \choose n}\right) ((\nu-1)u)^n \sum_{m=0}^{\nu j-1} { \nu j - 1 \choose m} u^m}{u^{j+1} }  du \\
&=& j!\, c_\nu^j   \sum_{\ell = 0}^{3g-3} b_\ell^{(0)}
\sum_{n=0}^\infty (\nu-1)^n \sum_{m=0}^{\nu j -1}  \left({2g-3+\ell \choose n}\right)   { \nu j - 1\choose m} \frac1{2\pi i} \oint u^{m +n - j -1}  du \\
&=& j!\, c_\nu^j   \sum_{\ell = 0}^{3g-3} b_\ell^{(0)}
\sum_{m=0}^{ j} (\nu-1)^{j-m} {(2g-4) + (\ell+j) -m \choose j-m}   { \nu j - 1\choose m}\\
&=& j!\, c_\nu^j (\nu - 1)^j  \sum_{\ell = 0}^{3g-3} b_\ell^{(0)}
{(2g-4) + (\ell+j)  \choose j}  \sum_{m=0}^{ \infty} \frac{ (1-\nu j)^{\overline{m}}  ( -j)^{\overline{m}}}{((4-2g) -(\ell+j) )^{\overline{m}}}  \frac{(1-\nu)^{-m}}{m!}\\
&=& j!\, c_\nu^j (\nu - 1)^j  \sum_{\ell = 0}^{3g-3} b_\ell^{(0)}
{(2g-4) + (\ell+j)  \choose j}
\ {_2}F_1
\left( \genfrac{}{}{0pt}{}
{-j,\ 1- \nu j}{4-2g-(\ell+j)}; \frac{1}{1-\nu} \right)
\end{eqnarray*}
The hypergeometric functions appearing in  ${\mathcal N}_{2\nu,z}(g,j)$ and ${\mathcal N}_{2\nu,e}(g,j)$ are well defined even though their denominator involves a negative parameter. This is because the corresponding terms in the sum already vanish through the rising factorial of $-j$. This convention is in agreement with the definition of the Gauss hypergeometric function given in \cite{bib:nist}, Equation 15.2.5, and is also used by Maple \cite{bib:maple} when evaluating these functions.

\section{Dynamics of the vectors $V^{(j)}$ under the linear transformations $M^{(n)}$} \label{app:4-valent_dynamics}

This appendix uses notation introduced in Section \ref{sec:rec_relations}. Because the partial fraction expansion of $R^{(j)}(z_0)$ involves a finite number of terms, any semi-infinite vector $V^{(j)}$ has a finite number of non-zero entries. We may thus ask how the number of such non-zero entries evolves with $j$. 

\begin{figure}[hbtp]
    \centering
    \includegraphics[width=\linewidth]{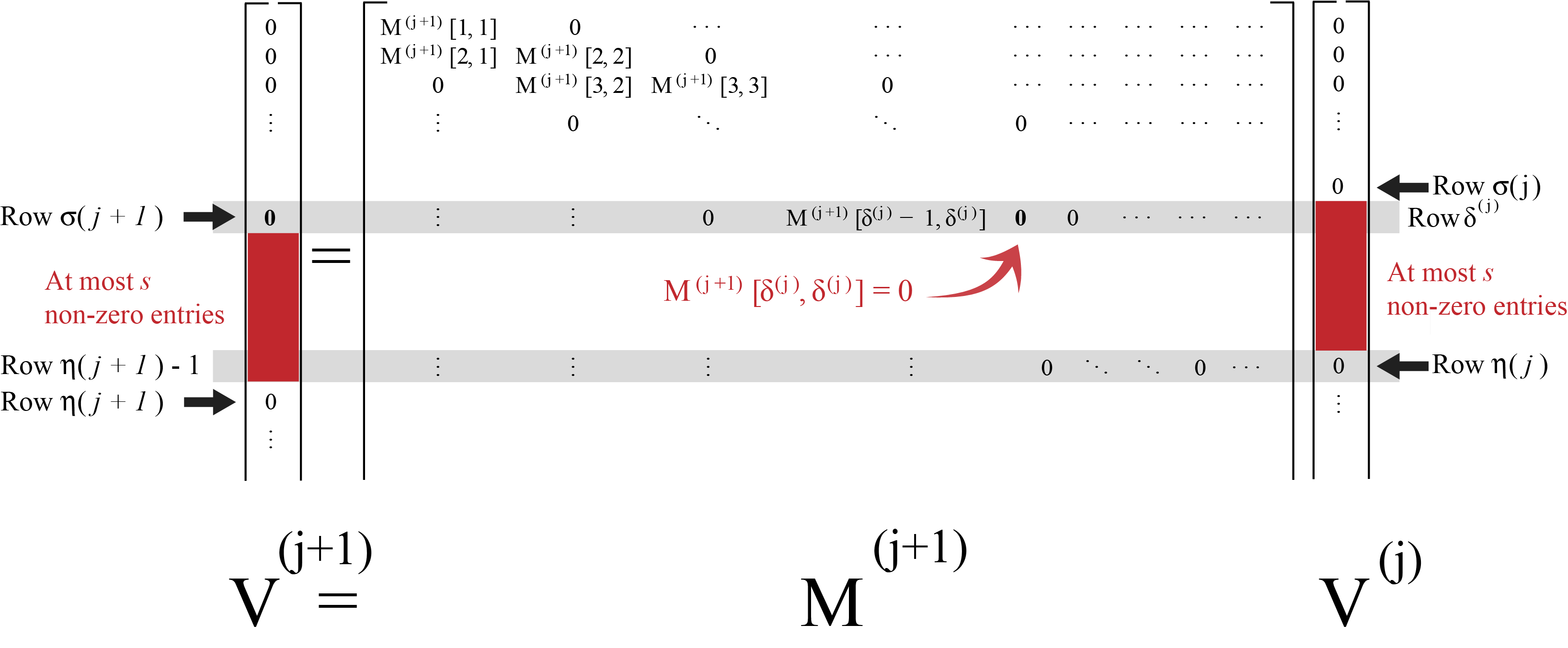}
    \caption{This figure illustrates that if $V^{(j)}$ has a band of length $s$ of possibly non-zero entries, from row $\sigma(j)+1$ to row $\eta(j)-1$, and if $M^{(j+1)}[\delta^{(j)},\delta^{(j)}]=0$ (in bold), where $\delta^{(j)}=\sigma(j)+1$, then the product $V^{(j+1)} = M^{(j+1)} V^{(j)}$ has a zero in slot $\delta^{(j)}$ (also in bold) followed by at most $s$ non-zero entries, from row $\sigma(j+1)+1$ to row $\eta(j+1)-1$. In each vector, the band of length $s = \eta(j)-1-\sigma(j) = \eta(j+1)-1-\sigma(j+1)$ is represented by a red rectangle. The pattern repeats for $V^{(j+2)}$ as long as $\delta(j+1)=\delta(j)+1$.}
    \label{fig:MV}
\end{figure}

\subsection{Number of non-zero entries of $V^{(j)}$}
For each vector $V^{(j)}$, we call $\sigma(j)$ the index marking the end of some initial band of zeros, and $\eta(j)$ the index marking the beginning of the semi-infinite string of zeros in $V^{(j)}$. In the 4-valent case, that is when $\nu = 2$, the following analysis proves that $s(j) := \eta(j) - 1 - \sigma(j)$, the generic number of non-zero entries of $V^{(j)}$, is constant for $j$ large enough. The reader is referred to Figure \ref{fig:MV} for the notation related to Theorem \ref{thm:band}.

\begin{theorem} \label{thm:band}
Assume $\delta^{(j+1)} = \delta^{(j)} + 1$ and let $j$ be large enough such that $\sigma(j) = \delta^{(j)} - 1 \ge 0$. Let $\eta(j)$ be such that $\eta(j) > \sigma(j)+1 = \delta^{(j)}$ and
\[
V^{(j)}[i] = 0 \text{ for } i \ge \eta(j).
\]
Then, one can choose $\eta(j)$ such that $\eta(j+1) = \eta(j) + 1$. If, in addition,
\[
V^{(j)}[i] = 0 \text{ for } i \le \sigma(j),
\]
then the length of the band of possibly non-zero entries of $V^{(j)}$, $s = \eta(j) - 1 - \sigma(j)$, is independent of $j$.
\end{theorem}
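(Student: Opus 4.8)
The plan is to track how the matrix $M^{(j+1)}$ acts on the band structure of $V^{(j)}$, using the explicit sub-diagonal form from Theorem \ref{thm:M} together with the hypothesis $\delta^{(j+1)} = \delta^{(j)}+1$. Recall that $M^{(j+1)}$ has only diagonal and first-sub-diagonal entries, so $V^{(j+1)}[i] = M^{(j+1)}[i,i]\,V^{(j)}[i] + M^{(j+1)}[i,i-1]\,V^{(j)}[i-1]$. First I would establish that the \emph{top} of the band moves down by exactly one: by definition of $\delta^{(j)}$ we have $M^{(j+1)}[\delta^{(j)},\delta^{(j)}] = 0$, and since $\sigma(j) = \delta^{(j)}-1$ means $V^{(j)}[i] = 0$ for $i \le \delta^{(j)}-1$, the entry $V^{(j+1)}[\delta^{(j)}]$ picks up only the term $M^{(j+1)}[\delta^{(j)},\delta^{(j)}-1]\,V^{(j)}[\delta^{(j)}-1] = 0$, while $V^{(j+1)}[\delta^{(j)}-1] = 0$ automatically. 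Hence $V^{(j+1)}[i] = 0$ for $i \le \delta^{(j)} = \sigma(j)+1$, i.e. $\sigma(j+1) \ge \sigma(j)+1$. The hypothesis $\delta^{(j+1)} = \delta^{(j)}+1$ then says exactly that $\sigma(j+1) = \delta^{(j+1)}-1 = \delta^{(j)} = \sigma(j)+1$, so the top of the band advances by precisely one (the subsequent entry $V^{(j+1)}[\delta^{(j+1)}]$ is generically non-zero because nothing forces it to vanish).

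Next I would handle the \emph{bottom} of the band. The claim is that one can choose $\eta(j+1) = \eta(j)+1$. If $V^{(j)}[i] = 0$ for all $i \ge \eta(j)$, then for $i \ge \eta(j)+1$ we have $V^{(j+1)}[i] = M^{(j+1)}[i,i]\cdot 0 + M^{(j+1)}[i,i-1]\cdot V^{(j)}[i-1]$, and $i-1 \ge \eta(j)$ forces $V^{(j)}[i-1] = 0$, so $V^{(j+1)}[i] = 0$. Thus $\eta(j+1) = \eta(j)+1$ is a valid choice of ``first index of the trailing zero string'' (it may not be the smallest such index, but that is why the theorem says ``one can choose'' — the generic band is at most this long, which is all that is needed). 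Combining the two halves, $s(j+1) = \eta(j+1) - 1 - \sigma(j+1) = (\eta(j)+1) - 1 - (\sigma(j)+1) = \eta(j) - 1 - \sigma(j) = s(j)$, which is the desired invariance, and by induction on $j$ (starting from the first $j$ large enough that $\sigma(j) = \delta^{(j)}-1 \ge 0$, guaranteed by the $\delta^{(j+1)} = \delta^{(j)}+1$ recursion once $\delta$ becomes non-negative) the length stays constant.

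The genuinely delicate point — and the one I would treat most carefully — is the bookkeeping that makes $\sigma(j) = \delta^{(j)} - 1$ an \emph{exact} equality rather than merely an inequality, and correspondingly that the band does not silently shrink. The inequality $\sigma(j+1) \ge \sigma(j)+1$ is forced by the vanishing of the diagonal entry, but to get equality one must argue that the next slot is generically non-zero; this is where the hypothesis $\delta^{(j+1)} = \delta^{(j)}+1$ (which in the 4-valent case $\nu=2$ follows from the formula $\delta^{(n)} = n(\nu-1) - \beta + 2$ noted just before the theorem) does the essential work, synchronizing the arithmetic progression of zero-diagonal indices with the one-step drift of the band. I would also need a sentence reconciling ``can choose $\eta(j)$'' with the possibility that $V^{(j)}$ has extra accidental zeros at the bottom: the statement is about the generic/maximal band, so one fixes the convention $\eta(j) := \sigma(j) + 1 + s$ from the outset and checks the induction closes with that convention. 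No serious computation is involved beyond substituting the entries of $M^{(n)}$ from Theorem \ref{thm:M}; the content is entirely in the index arithmetic and the sub-diagonal sparsity pattern.
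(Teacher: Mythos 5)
Your argument is correct and follows essentially the same route as the paper's own proof: apply the two-term recurrence $V^{(j+1)}[i] = c_\nu\bigl(M^{(j+1)}[i,i-1]V^{(j)}[i-1] + M^{(j+1)}[i,i]V^{(j)}[i]\bigr)$, use the vanishing diagonal entry at index $\delta^{(j)}$ together with the hypothesis $\delta^{(j+1)}=\delta^{(j)}+1$ to show the leading block of zeros grows by exactly one, and use the sub-diagonal sparsity to show the trailing zeros also shift down by one, so the band length is preserved. Your added remarks on the distinction between $\sigma(j+1)\ge\sigma(j)+1$ and equality, and on the ``one can choose $\eta$'' convention, are consistent with (and slightly more explicit than) the paper's treatment.
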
 
\begin{proof}
For $i \ge 1$ and $j \ge j_G$ such that $\sigma(j) = \delta^{(j)} - 1 \ge 0$, suppose that $V^{(j)}[i] = 0 \text{ for } i \le \sigma(j)$. We calculate $V^{(j+1)}[i]$, using the structure of the matrix $M^{(j+1)}.$ Since
\[
V^{(j+1)}[i] = c_\nu \left(M^{(j+1)}[i,i-1] \cdot  V^{(j)}[i-1] + M^{(j+1)}[i,i] \cdot V^{(j)}[i]\right),
\]
we can establish the following.
\begin{itemize}
\item $\displaystyle V^{(j+1)}[i] = 0$ for $i \le \sigma(j)$ since both $V^{(j)}[i-1]$ and $V^{(j)}[i]$ are zero. 
\item $\displaystyle V^{(j+1)}[\sigma(j)+1] = -c_\nu \, M^{(j+1)}[\sigma(j)+1,\sigma(j)+1] \ V^{(j)}[\sigma(j)+1] = 0$ since $\sigma(j)+1 = \delta^{(j)}$ and $\displaystyle M^{(j+1)}[\delta^{(j)},\delta^{(j)}]=0$.
\end{itemize}
Therefore, $V^{(j+1)}[i] = 0 \text{ for } i \le \sigma(j)+1 =  \delta^{(j)} + 2 = \delta^{(j+1)} + 1 = \sigma(j+1)$. Note that crucial to this step is the assumption that $\delta^{(j+1)} = \delta^{(j)} + 1$. Similarly,
\begin{itemize}
\item $V^{(j+1)}[i] = 0$ for $i \ge \eta(j)+1$ since both $V^{(j)}[i-1]$ and $V^{(j)}[i]$ are zero. We can thus define $\eta(j+1) = \eta(j) + 1$.
\end{itemize}
Therefore, $V^{(j+1)}[i] = 0 \text{ for } i \ge \eta(j+1)$. 
Now, consider the length $s(j) = (\eta(j) - 1) - \sigma(j)$ of the vector $X^{(j)} = \left[V^{(j)}[\sigma(j)+1], \cdots, V^{(j)}[\eta(j)-1]\right]$ consisting of the band of possibly non-zero entries of $V^{(j)}$. We have
\[
s(j+1) = \eta(j+1)-1-\sigma(j+1) = \big(\eta(j) + 1\big) - 1 - \big(\sigma(j) + 1\big) = \eta(j)-1-\sigma(j) = s(j)
\]
and is therefore independent of $j$.
\end{proof}
This Theorem is only useful if $\delta^{(j+1)} = \delta^{(j)} + 1$. This is the case when $\nu = 2$, as shown below.

\begin{lemma} \label{lm:j0}
When $\nu = 2$, 
\[
\delta^{(j+1)} = \delta^{(j)} + 1.
\]
In addition, 
\[
\delta^{(j)} \ge 1 \text{ for } j \ge j_0 = \beta - 1.
\]
\end{lemma}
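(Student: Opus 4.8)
The plan is to unpack the definition $\delta^{(n)} = n(\nu-1) - \beta + 2$ given at the end of Section~\ref{sec:rec_relations} and simply substitute $\nu=2$. With $\nu=2$ this reads $\delta^{(n)} = n - \beta + 2$, which is affine in $n$ with slope $1$, so $\delta^{(j+1)} - \delta^{(j)} = 1$ follows immediately. (One should double-check the indexing conventions: $\delta^{(n)}$ was defined so that $M^{(n+1)}[\delta^{(n)},\delta^{(n)}] = 0$, and from Theorem~\ref{thm:M} we have $M^{(n+1)}[k,k] = (n+1)(\nu-1) - (\beta + k + \nu - 3)$; setting this to zero gives $k = (n+1)(\nu-1) - \beta - \nu + 3 = n(\nu-1) - \beta + 2$, consistent with the stated formula. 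Thus the verification of the displayed formula for $\delta^{(n)}$ is the only genuinely computational check, and it is a one-line substitution.)

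For the second claim, with $\nu = 2$ we have $\delta^{(j)} = j - \beta + 2$. The assertion $\delta^{(j)} \ge 1$ is then equivalent to $j \ge \beta - 1 = j_0$, which is exactly the stated range. So this half of the lemma is also immediate once the formula $\delta^{(j)} = j-\beta+2$ is in hand: $\delta^{(j)} \ge 1 \iff j - \beta + 2 \ge 1 \iff j \ge \beta - 1$.

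The only point requiring a sentence of care is the relationship between the two parts: one wants $\sigma(j) = \delta^{(j)} - 1 \ge 0$ for Theorem~\ref{thm:band} to apply, i.e. $\delta^{(j)} \ge 1$, which is precisely why the bound $j \ge j_0 = \beta-1$ is recorded here alongside the increment identity. I would therefore present the proof in two short steps: first substitute $\nu = 2$ into $\delta^{(n)} = n(\nu-1) - \beta + 2$ to get $\delta^{(n)} = n - \beta + 2$ and read off $\delta^{(j+1)} = \delta^{(j)} + 1$; then observe $\delta^{(j)} = j - \beta + 2 \ge 1$ exactly when $j \ge \beta - 1 = j_0$.

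I do not anticipate a real obstacle here — this lemma is essentially bookkeeping that isolates, for the reader's convenience, the two elementary facts about the affine sequence $\delta^{(n)}$ that make the band-length argument of Theorem~\ref{thm:band} go through in the four-valent case. The only thing to be vigilant about is consistency of the off-by-one conventions (indices starting at $1$, the shift between $M^{(n)}$ and $M^{(n+1)}$, and the definition of $\sigma(j)$), which is why I would include the explicit re-derivation of $\delta^{(n)}$ from $M^{(n+1)}[k,k]=0$ rather than just quoting the formula.
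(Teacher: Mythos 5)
Your proposal is correct and follows essentially the same route as the paper: the paper likewise reads off $\delta^{(j)} = j(\nu-1)-\beta+2$ as linear in $j$ with slope $1$ when $\nu=2$, and obtains the second claim by solving $\delta^{(j_0)}=1$ and noting monotonicity. Your explicit re-derivation of $\delta^{(n)}$ from $M^{(n+1)}[k,k]=0$ is a sound consistency check that the paper omits, but it does not change the argument.
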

\begin{proof}
Recall that $\delta^{(j)}$ is the unique index such that $M^{(j+1)}\Big[\delta^{(j)},\delta^{(j)}\Big] = 0$, where $M^{(j)}[k,k]$ is given in Theorem \ref{thm:M}. Since $\delta^{(j)} = j (\nu-1)-\beta+2$, this index is linear in $j$ with slope equal to $1$ when $\nu = 2$, so that $\delta^{(j+1)}= \delta^{(j)} + 1$. The last statement of the lemma is obtained by solving $\delta^{(j_0)} = 1$ for $j_0$ and noting that the $\delta^{(j)}$ are increasing functions of $j$.
\end{proof}

Since the conditions of Theorem \ref{thm:band} apply when $j = j_0$, we have the following result.
\begin{lemma} \label{lm:N_val}
When $\nu = 2$, we can choose the length $s$ of the vector
\[
X^{(j)} = \left[V^{(j)}[h(j)+1], \cdots, V^{(j)}[k(j)-1]\right]
\]
to be such that
\[
s = \alpha + \beta.
\]
\end{lemma}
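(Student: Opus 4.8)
\textbf{Plan of proof for Lemma \ref{lm:N_val}.}

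The plan is to combine Theorem \ref{thm:band} with Lemma \ref{lm:j0} and then track the band length forward and backward from the base index $j = j_0 = \beta - 1$. First I would fix $\nu = 2$ so that Lemma \ref{lm:j0} applies, giving $\delta^{(j+1)} = \delta^{(j)} + 1$ for all $j$ and $\delta^{(j_0)} = 1$, hence $\sigma(j_0) = \delta^{(j_0)} - 1 = 0$. The key observation is that this is exactly the hypothesis $\sigma(j) = \delta^{(j)} - 1 \ge 0$ demanded by Theorem \ref{thm:band}, and since $\delta^{(j)}$ is increasing in $j$, the hypothesis persists for all $j \ge j_0$. So by Theorem \ref{thm:band} (applied inductively starting at $j_0$), the band length $s(j) = \eta(j) - 1 - \sigma(j)$ is constant for $j \ge j_0$; it therefore suffices to evaluate $s(j_0)$.

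Next I would compute $s(j_0)$ directly from the definition of $V^{(j_G)}$. By Equation \eqref{eq:Vj}, $V^{(j_G)}$ has possibly non-zero entries $q_0^{(j_G)}, \dots, q_{\alpha + j_G}^{(j_G)}$ occupying $\alpha + j_G + 1$ consecutive slots, the first of which sits at the index corresponding to the lowest power $(\nu - (\nu-1)z_0)^{-(\beta + j_G)}$, i.e.\ at row $\beta + j_G - (\beta - 1) + 1$ wait — more carefully: the row index of the first entry is $\beta + j_G$ counted from the convention that $(\nu-(\nu-1)z_0)^{-1}$ is row $1$, so the band runs from row $\beta + j_G$ to row $\beta + j_G + \alpha + j_G$. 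Hence at $j = j_G$ one would read off a band of length $\alpha + j_G + 1$ sitting in rows $[\beta + j_G,\, \beta + \alpha + 2 j_G]$. But Theorem \ref{thm:band} is being applied starting at $j_0 = \beta - 1 \ge j_G$ (the hypothesis $j_G \le j_0$ is exactly $\beta \ge j_G + 1$, which holds in both the $z_g$ and $e_g$ cases by Equation \eqref{eq:param}); so I would iterate the recurrence \eqref{eq:rec} — or equivalently apply the matrices $M^{(n)}$ — from $j_G$ up to $j_0$ and track how the band moves and possibly loses leading entries to the zero produced at slot $\delta^{(\cdot)}$. The point is that for $j < j_0$ one has $\delta^{(j)} \le 0$, so the "killing" of the top entry happens at a slot outside the vector's positive-index range and the band simply grows by one on the right at each step; once $j$ reaches $j_0$, $\delta^{(j_0)} = 1$ enters the picture and from then on each step kills one leading entry and adds one trailing entry, stabilizing the length. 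Counting: starting from length $\alpha + j_G + 1$ at step $j_G$ and growing by one per step until step $j_0 = \beta - 1$, the length at $j_0$ is $(\alpha + j_G + 1) + (j_0 - j_G) = \alpha + \beta$. Thus $s = \alpha + \beta$, as claimed, and by the constancy from Theorem \ref{thm:band} this value is independent of $j$ for all $j \ge j_0$.

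\textbf{Main obstacle.} The delicate point is the bookkeeping of the \emph{position} of the band (the indices $\sigma(j)$, $\eta(j)$) as $j$ runs from $j_G$ up to $j_0$, making sure that the leading entries are genuinely removed only once $\delta^{(j)}$ becomes a positive index, and that no trailing zeros are spuriously absorbed into the band. In particular one must confirm that the top entry $q_0^{(j)}$ does not vanish for $j < j_0$ (so that the band really does have the stated length at $j_0$) and that the single zero created at slot $\delta^{(j)}$ for $j \ge j_0$ lands exactly at the leading edge $\sigma(j)+1$ — this is where the identity $\delta^{(j+1)} = \delta^{(j)}+1$ from Lemma \ref{lm:j0} is essential, since it guarantees the new zero at step $j+1$ is adjacent to, and hence merges with, the already-established block of leading zeros rather than carving a gap in the interior of the band. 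Once this alignment is verified, the length computation is the elementary arithmetic $(\alpha + j_G + 1) + (\beta - 1 - j_G) = \alpha + \beta$.
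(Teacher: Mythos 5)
Your proposal is correct and arrives at exactly the count the paper uses: the paper's proof is a one-line observation that Equation \eqref{eq:df} at $j = j_0 = \beta - 1$ gives at most $\alpha + j_0 + 1 = \alpha + \beta$ terms in the partial fraction expansion of $R^{(j_0)}(z_0)$, which is the same number $(\alpha + j_G + 1) + (j_0 - j_G)$ you obtain by iterating the recurrence up from $j_G$. The extra step-by-step bookkeeping, and in particular the worry about $q_0^{(j)}$ not vanishing for $j < j_0$, is unnecessary because the lemma only asserts that $s = \alpha + \beta$ is a valid \emph{choice} for the band of possibly non-zero entries (any larger value also works, as the remark following the lemma notes).
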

\begin{proof}
Equation \eqref{eq:df} shows that when $j=j_0$, the number of terms in the partial fraction expansion of $R^{(j)}(z_0)$ is no larger than $\alpha + j_0 + 1$, which can then be used to define $s$.
\end{proof}

\begin{rem}
Clearly, since the value of $\eta(j)$ can be arbitrarily increased, any value of $s$ larger than that set in Lemma \ref{lm:N_val} is also valid. However, we know that $\alpha + \beta = 5 g - 1$ (resp. $\alpha + \beta = 5 g - 5$) is the exact number of non-zero coefficients in the partial fraction expansion of $R^{(j)}(z_0)$ when $G=z_g$ (resp. $G=e_g$) \cite{bib:er,bib:er14}.
\end{rem}

To summarize, we have shown that when $\nu = 2$ and $j \ge j_0 \ge j_G$, the vector $V^{(j)}$ has the form
\[
V^{(j)} = [0, \cdots, 0, x_1^{(j)}, x_2^{(j)}, \cdots, x_s^{(j)}, 0, \cdots]
\]
where $x_k^{(j)} = V^{(j)}[\sigma(j)+k]$ and we defined
$X^{(j)} = [x_1^{(j)}, \cdots, x_s^{(j)}]$. 
Recall that $j_G$ is the lowest value of $j$ for which $R^{(j)}(z_0)$ has a partial fraction expansion that satisfies the form assumed to derive the recurrence relations of Theorem \ref{thm:M}. 

\begin{rem}
\label{rem:jG_less_j0}
If $j_G < j_0$, then the recurrence relations still apply and the above results may be extended to values of $j$ such that $j_0 > j \ge j_G$ provided the partial fraction expansion of $R^{(j)}$ has no more than $s$ terms. In such a case, it is still possible to define $X^{(j)}$ as above, by padding the vector of initial coefficients with zeros on the left, to make it exactly of size $s$. From Equation \eqref{eq:df}, we see that the number of terms in the partial fraction expansion of $R^{(j)}$ is equal to $\alpha + j + 1$. Setting $\alpha + j + 1 \le s$ gives $j \le \beta - 1 = j_0.$ Therefore, the structure of $V^{(j)}$ and the definition of $X^{(j)}$ are valid for all $j \ge j_G$ as long as $j_G \le j_0$.
\end{rem}

\begin{rem} 
\label{rem:jG_greater_j0}
If $j_G > j_0$, the condition 
\[
V^{(j)}[i] = 0 \text{ for } i \le \sigma(j),
\]
specified in Theorem \ref{thm:band} is never satisfied. Indeed, for $q_0^{(j+1)}$ to be zero when $q_0^{(j)} \ne 0$, $d^j G / d t^j$ should be in the form prescribed by Equation \eqref{eq:df} before or at the same time as $\delta^{(j)}$ becomes equal to one. Since $\delta^{(j)}=1$ when $j = j_0$, one needs $j_0 \ge j_G$.
\end{rem}

\subsection{Reduction of the dynamics to a vector of fixed length}
Since when $\nu = 2$, the length $s$ is independent of $j$ for $j \ge j_G$, the evolution of $V^{(j)}$ under the linear dynamics corresponding to successive applications of $M^{(n)}$ can be reduced to a description of how the vector $X^{(j)}$ maps into $X^{(j+1)}$. 

\begin{theorem} \label{thm:A_matrix}
The vector $X^{(j+1)}$ is obtained from $X^{(j)}$ according to the linear transformation
$X^{(j+1)} = c_2 \, A^{(j)} X^{(j)},$ where $X^{(j)} = [x_1^{(j)}, \cdots, x_s^{(j)}]$ and the matrix $A^{(j)}$ is the $s \times s$ truncation of the semi-infinite matrix
\begin{equation}
    \label{eq:Matrix_A}
A^{(j,\infty)} = \begin{bmatrix}
2 (2j+1) & -1 & 0 & \cdots\\
0 & 2(2j+2) & -2 & 0 & \cdots \\
\cdots & 0 & 2(2j+3) & -3 & 0 & \cdots \\
\vdots & \qquad \ddots & \qquad \ddots & \qquad \ddots & \qquad \ddots & \cdots\\
\cdots & \cdots & \qquad 0 & \qquad 2(2j+k) & \qquad -k & \qquad 0 & \cdots \\
\vdots & \vdots & \vdots & \quad \ddots & \quad \ddots & \quad \ddots & \quad \ddots
\end{bmatrix}.
\end{equation}
\end{theorem}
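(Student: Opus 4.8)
The plan is to translate the entrywise recurrence $q_\ell^{(j+1)}=(j\nu+1-\beta-\ell-j)q_\ell^{(j)}+\nu(\beta+\ell-1+j)q_{\ell-1}^{(j)}$ established in Section \ref{sec:rec_relations} into the language of the shifted, truncated coordinates $X^{(j)}=[x_1^{(j)},\dots,x_s^{(j)}]$ with $x_k^{(j)}=V^{(j)}[\sigma(j)+k]$. First I would recall, from Theorem \ref{thm:band} and Lemma \ref{lm:j0} (the case $\nu=2$), that the band of possibly non-zero entries of $V^{(j)}$ occupies rows $\sigma(j)+1,\dots,\eta(j)-1$ with $\sigma(j)=\delta^{(j)}-1$, $\sigma(j+1)=\sigma(j)+1$, and width $s=\alpha+\beta$ independent of $j$. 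So the index translation is simply $V^{(j)}[\sigma(j)+k]\leftrightarrow x_k^{(j)}$ and $V^{(j+1)}[\sigma(j)+1+k]\leftrightarrow x_k^{(j+1)}$.

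Next I would set $\nu=2$ in Theorem \ref{thm:M}, which gives $M^{(n)}[k,k]=n-(\beta+k-1)$ and $M^{(n)}[k,k-1]=2(\beta+k+n-3)$, and apply it with $n=j+1$ and $k=\sigma(j)+1+m$ for $m=1,\dots,s$. Using $\beta=\delta^{(j)}$-related identities — concretely $\sigma(j)=\delta^{(j)}-1=j-\beta+1$ for $\nu=2$ — the diagonal entry $M^{(j+1)}[\sigma(j)+1+m,\,\sigma(j)+1+m]$ collapses to $(j+1)-(\beta+\sigma(j)+m)=(j+1)-(\beta+j-\beta+1+m)=-m$, wait—one must be careful: the recursion for $V^{(j+1)}=c_\nu M^{(j+1)}V^{(j)}$ reads $V^{(j+1)}[i]=c_\nu(M^{(j+1)}[i,i-1]V^{(j)}[i-1]+M^{(j+1)}[i,i]V^{(j)}[i])$, so in coordinates $x^{(j+1)}_m = c_2\big(M^{(j+1)}[\sigma(j)+1+m,\sigma(j)+m]\,x^{(j)}_m + M^{(j+1)}[\sigma(j)+1+m,\sigma(j)+1+m]\,x^{(j)}_{m+1}\big)$; here the sub-diagonal coupling lands on $x^{(j)}_m$ and the diagonal coupling on $x^{(j)}_{m+1}$ because of the unit shift $\sigma(j+1)=\sigma(j)+1$. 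A direct substitution then yields $M^{(j+1)}[\sigma(j)+1+m,\sigma(j)+m]=2(\beta+(\sigma(j)+1+m)+(j+1)-3)$, and with $\sigma(j)=j-\beta+1$ this equals $2(2j+m)$, while $M^{(j+1)}[\sigma(j)+1+m,\sigma(j)+1+m]=(j+1)-(\beta+(\sigma(j)+1+m)-1)=-m$. These are exactly the diagonal entry $2(2j+m)$ and super-diagonal entry $-m$ of $A^{(j,\infty)}$ in row $m$.

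The remaining point is the truncation: I would argue that the rows $m=1,\dots,s$ close on themselves — the sub-diagonal term for $m=1$ would reach $x^{(j)}_0$, which is zero by the left band condition $V^{(j)}[i]=0$ for $i\le\sigma(j)$ (valid for $j\ge j_0\ge j_G$ by Lemma \ref{lm:j0} and Remark \ref{rem:jG_less_j0}), and the super-diagonal term for $m=s$ reaches $x^{(j)}_{s+1}$, which is zero because $\eta(j)=\sigma(j)+s+1$ is the start of the trailing zeros. Hence the $s\times s$ truncation $A^{(j)}$ of $A^{(j,\infty)}$ captures the dynamics exactly, and $X^{(j+1)}=c_2 A^{(j)}X^{(j)}$ follows. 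The main obstacle I anticipate is purely bookkeeping: keeping the three index shifts (the $+1$ in $n=j+1$ from Theorem \ref{thm:M}, the $-1$ vs $+1$ offset between $\sigma(j)$ and the matrix row index, and the unit drift $\sigma(j+1)=\sigma(j)+1$) consistent, so that the sub-diagonal of $M$ maps to the \emph{diagonal} of $A$ and the diagonal of $M$ to the \emph{super-diagonal} of $A$ with the correct signs; once the offsets are pinned down the algebra is immediate. I would also state explicitly that the identification requires $j\ge j_G$ with $j_G\le j_0=\beta-1$, exactly the hypothesis under which Theorem \ref{thm:band} and Remark \ref{rem:jG_less_j0} guarantee the fixed-width band structure.
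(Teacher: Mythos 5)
Your proposal is correct and follows essentially the same route as the paper: a direct substitution of $n=j+1$, $k=\sigma(j)+1+m$ with $\sigma(j)=j-\beta+1$ into the entries of $M^{(n)}$ from Theorem \ref{thm:M}, using the unit drift $\sigma(j+1)=\sigma(j)+1$ to turn the subdiagonal of $M$ into the diagonal $2(2j+m)$ of $A$ and the diagonal of $M$ into the superdiagonal $-m$. The only (harmless) slip is in your truncation remark: for $m=1$ the subdiagonal coupling lands on $x_1^{(j)}$, not $x_0^{(j)}$, so the only boundary convention actually needed is $x_{s+1}^{(j)}=0$, exactly as the paper states.
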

\begin{proof}
This is obtained by direct calculation, noting that $x_k^{(j)} = V^{(j)}[\sigma(j)+k].$ Specifically,
\begin{align*}
\frac{1}{c_2} x_k^{(j+1)} & = \frac{1}{c_2} V^{(j+1)}[\sigma(j+1)+k] = \frac{1}{c_2} V^{(j+1)}[\sigma(j)+1+k]\\
& = M^{(j+1)}[\sigma(j)+1+k,\sigma(j)+k] \cdot V^{(j)}[\sigma(j)+k] \\
& \qquad + M^{(j+1)}[\sigma(j)+1+k,\sigma(j)+1+k] \cdot V^{(j)}[\sigma(j)+k+1]\\
& = M^{(j+1)}[\sigma(j)+1+k,\sigma(j)+k] \cdot x_k^{(j)} + M^{(j+1)}[\sigma(j)+1+k,\sigma(j)+1+k] \cdot x_{k+1}^{(j)}.
\end{align*}
We can calculate
\begin{align*}
M^{(j+1)}[\sigma(j)+1+k,\sigma(j)+k] & = M^{(j+1)}[j-\beta+2+k,j-\beta+1+k] \\
& = 2(\beta+j-\beta+2+k+j+1-3) = 2(2j+k)
\end{align*}
and 
\begin{align*}
M^{(j+1)}[\sigma(j)+1+k,\sigma(j)+1+k]
& = M^{(j+1)}[j-\beta+2+k,j-\beta+2+k] \\ & = j+1-(\beta+j-\beta+2+k-1) = - k.
\end{align*}
Therefore,
\[
\frac{1}{c_2}\, x_k^{(j+1)} =  2(2j+k)\ x_k^{(j)} - k\ x_{k+1}^{(j)}, \qquad 1 \le k \le s
\]
with the understanding that $x_{s+1}^{(j)} = 0$. This proves the statement.
\end{proof}

\section{Map Counts as Vectorial Contractions}
\label{app:map_cts_dot_product}
We now return to the definitions of ${\mathcal N}_{4,z}(g,j)$ and ${\mathcal N}_{4,e}(g,j)$ in Equations  \eqref{eq:map_counts_z} and \eqref{eq:map_counts_e}. In terms of $G$, we are interested in calculating
\[
{\mathcal N}_4(j) = \left. (-1)^j \, \dfrac{d^j G}{d t^j}\right\vert_{t=0} = G^{(j)}(1) = R^{(j)}(1)
\]
since $z_0 = 1$ when $t = 0$.
Because $V^{(j)}$ contains the coefficients of the partial fraction expansion of $R^{(j)}(z_0)$ in powers of $(2-z_0)$ (which is equal to $1$ when $z_0 = 1$), the sum of the entries of $V^{(j)}$ is exactly equal to $R^{(j)}(1)$. Consequently, since the section of $V^{(j)}$ with non-zero entries is equal to the vector $X^{(j)}$, we have
\begin{equation}
\label{eq:counts}
{\mathcal N}_4(j) = \sum_{i=1}^{s} X^{(j)}[i].
\end{equation}
The problem of counting maps has therefore been reduced to finding the sum of entries of a vector resulting from successive applications of the matrices $A^{(j)}$. Specifically,
\[
X^{(j)} = c_2^{j-j_G} \left(\overleftarrow{\prod_{k=j_G}^{j-1}} 
A^{(k)} \right) X^{(j_G)},
\]
where $j_G$ is the lowest value of $j$ for which $R^{(j)}(z_0)$ has a partial fraction expansion of the form given in \eqref{eq:df}. This allows us to restate Equation \eqref{eq:counts} in terms of the following lemma.
\begin{lemma} \label{lm:counts}
For $\nu = 2$, let ${\mathcal R}^{(j)}$ be the row vector obtained by summing the columns of the matrix
\[
{\mathcal M}^{(j)} = \overleftarrow{\prod_{k=j_G}^{j-1}} 
A^{(k)}.
\]
The number of 4-valent, 2-legged (resp. regular) maps with $j$ vertices that can be embedded in a surface of genus $g$ is given by the contraction
\[
{\mathcal N}_4(j) = c_2^{j-j_G}\ {\mathcal R}^{(j)} \cdot X^{(j_G)}
\]
where $G = z_g$ (resp. $G=e_g$). For reference, the $s \times s$ matrix $A^{(k)}$ is defined in Theorem \ref{thm:A_matrix}, $s = \alpha + \beta$ was calculated in Lemma \ref{lm:N_val}, and the values of $j_G$, $\alpha$, and $\beta$ for $z_g$ and $e_g$ are given in Equation \eqref{eq:param}.
\end{lemma}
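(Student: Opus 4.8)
The plan is to assemble the statement from ingredients already established, so that the proof is essentially bookkeeping. The two facts to combine are: first, Equation \eqref{eq:counts}, which says ${\mathcal N}_4(j) = \sum_{i=1}^{s} X^{(j)}[i]$; and second, the ordered-product representation $X^{(j)} = c_2^{j-j_G}\,{\mathcal M}^{(j)}\,X^{(j_G)}$ with ${\mathcal M}^{(j)} = \overleftarrow{\prod_{k=j_G}^{j-1}} A^{(k)}$, obtained by iterating the one-step recursion $X^{(j+1)} = c_2\,A^{(j)} X^{(j)}$ of Theorem \ref{thm:A_matrix}. The single elementary observation linking them is that the sum of the entries of a matrix--vector product equals the contraction of the vector of column sums of the matrix with the vector: if $u^{T} = (1,\ldots,1)$ is the all-ones row vector then $u^{T}(B v) = (u^{T} B) v$, and $u^{T} B$ is precisely the row of column sums of $B$.

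Concretely, I would first recall the derivation of Equation \eqref{eq:counts}: since ${\mathcal N}_4(j) = R^{(j)}(1)$ and $V^{(j)}$ collects the coefficients of the partial fraction expansion of $R^{(j)}(z_0)$ in powers of $(2-z_0)$ --- a quantity equal to $1$ at $z_0 = 1$ --- the number $R^{(j)}(1)$ is the sum of the entries of $V^{(j)}$, which by Theorem \ref{thm:band} and Lemma \ref{lm:N_val} collapses to $\sum_{i=1}^{s} X^{(j)}[i]$ for every $j \ge j_G$, using the left-padding convention of Remark \ref{rem:jG_less_j0} when $j_G < j_0$. Substituting the ordered-product formula and interchanging the two finite sums gives
\[
{\mathcal N}_4(j) = \sum_{i=1}^{s} X^{(j)}[i] = c_2^{j-j_G} \sum_{i=1}^{s} \sum_{m=1}^{s} {\mathcal M}^{(j)}[i,m]\, X^{(j_G)}[m] = c_2^{j-j_G} \sum_{m=1}^{s} \left(\sum_{i=1}^{s} {\mathcal M}^{(j)}[i,m]\right) X^{(j_G)}[m],
\]
and since ${\mathcal R}^{(j)}[m] = \sum_{i=1}^{s} {\mathcal M}^{(j)}[i,m]$ is by definition the $m$-th column sum of ${\mathcal M}^{(j)}$, the right-hand side is exactly $c_2^{j-j_G}\,{\mathcal R}^{(j)} \cdot X^{(j_G)}$. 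The distinction between $G = z_g$ and $G = e_g$ enters only through the values of $s$, $j_G$, $\beta$ and the initial vector $X^{(j_G)}$, which are carried along as parameters and never touched by the manipulation.

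I do not expect a genuine obstacle: the content is purely formal. The two points requiring a word of justification are (i) the legitimacy of replacing the semi-infinite dynamics by the $s \times s$ truncation for all $j \ge j_G$, which is exactly what Theorem \ref{thm:band}, Lemma \ref{lm:N_val}, and Remark \ref{rem:jG_less_j0} guarantee under the standing hypothesis $j_G \le j_0$; and (ii) the interchange of summation order, which is harmless because both index ranges are finite. As a consistency check one can verify that the ${\mathcal R}^{(j)}$ produced here agrees with the closed form $\frac{1}{2^{n-1}}\sum_{k=1}^{n}\binom{n-1}{k-1}\prod_{\ell=j_G}^{j-1} 2(2\ell+k)$ recorded in Theorem \ref{thm:gen_counts}; that identification rests on the simultaneous diagonalization of the matrices $A^{(k)}$ and is logically separate from the present lemma.
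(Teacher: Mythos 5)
Your proposal is correct and matches the paper's treatment: the paper likewise obtains the lemma as an immediate restatement of Equation \eqref{eq:counts} combined with the ordered-product formula $X^{(j)} = c_2^{j-j_G}\,{\mathcal M}^{(j)}\,X^{(j_G)}$ and the observation that summing the entries of a matrix--vector product is the contraction of the column-sum row vector with the vector. Your explicit justification of the truncation to length $s$ and of the interchange of finite sums only makes explicit what the paper leaves implicit.
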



The next theorem exploits the remarkable structure of the $A^{(j,\infty)}$ matrices to describe the structure of the row vectors ${\mathcal R}^{(j)}$.

\begin{theorem} \label{thm:row_vector}
The $n^{\rm{th}}$ entry of the row vector ${\mathcal R}^{(j)}$ satisfies
\[
{\mathcal R}^{(j)}[n] = \frac{1}{2^{n-1}} \sum_{k=0}^{n-1} \binom{n-1}{k} d_{k+1}^{(j)}, \qquad d_m^{(j)} = \prod_{\ell=j_G}^{j-1}
2 (2 \ell + m).
\]
\end{theorem}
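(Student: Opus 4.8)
The plan is to recast the column-sum vector as a quantity governed by a simple scalar recursion in $j$, and then to verify that the asserted closed form obeys that recursion. Summing the columns of ${\mathcal M}^{(j)}$ is the same as left-multiplication by the all-ones row vector, so I write ${\mathcal R}^{(j)} = \mathbf{1}^{T}{\mathcal M}^{(j)}$ with $\mathbf{1}^{T}=[1,\dots,1]$ of length $s$. Since ${\mathcal M}^{(j_G)}$ is the empty product $I_s$, the base case is ${\mathcal R}^{(j_G)}[n]=1$ for $1\le n\le s$, which already agrees with the claimed formula because $d_m^{(j_G)}=1$ (empty product) and $\sum_{k=0}^{n-1}\binom{n-1}{k}=2^{n-1}$.

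The key structural observation is that the matrices $A^{(k)}$ pairwise commute. Indeed, by Theorem \ref{thm:A_matrix} one may write $A^{(k)}=4kI_s+B$, where $B$ is the fixed $s\times s$ upper-bidiagonal matrix with $B[m,m]=2m$ and $B[m,m+1]=-m$; any two of the $A^{(k)}$ then differ by a scalar multiple of the identity, so they commute. Hence the ordered product ${\mathcal M}^{(j+1)}=A^{(j)}{\mathcal M}^{(j)}$ may equally be written ${\mathcal M}^{(j+1)}={\mathcal M}^{(j)}A^{(j)}$, and left-multiplying by $\mathbf{1}^{T}$ gives the self-contained recursion ${\mathcal R}^{(j+1)}={\mathcal R}^{(j)}A^{(j)}$. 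Reading off the entries of column $n$ of $A^{(j)}$, namely $A^{(j)}[n,n]=2(2j+n)$ and $A^{(j)}[n-1,n]=-(n-1)$, and adopting the convention ${\mathcal R}^{(j)}[0]=0$, this becomes
\[
{\mathcal R}^{(j+1)}[n] = 2(2j+n)\,{\mathcal R}^{(j)}[n] - (n-1)\,{\mathcal R}^{(j)}[n-1], \qquad 1\le n\le s.
\]
(The truncation to size $s$ produces no boundary effect here, since column $n$ of $A^{(j)}$ retains both of the above entries for every $n\le s$.)

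It then remains to verify by induction on $j$ that the right-hand side of the theorem satisfies this recursion, starting from the base case just established. Substituting the ansatz, using $d_{k+1}^{(j+1)}=2(2j+k+1)\,d_{k+1}^{(j)}$, clearing the common factor $2(2j+n)$, and collecting terms, the required identity reduces to
\[
\sum_{k=0}^{n-1}\binom{n-1}{k}(k+1-n)\,d_{k+1}^{(j)} = -(n-1)\sum_{k=0}^{n-2}\binom{n-2}{k}\,d_{k+1}^{(j)},
\]
which follows term by term from the Pascal-type identity $\binom{n-1}{k}(n-1-k)=(n-1)\binom{n-2}{k}$, the $k=n-1$ term on the left vanishing. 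I do not anticipate a genuine obstacle: the argument is almost entirely bookkeeping. The one point that does the real work is the commutativity remark, which is what allows the recursion for ${\mathcal R}^{(j)}$ to close on itself rather than forcing one to track a growing family of weighted column sums; beyond that, the only care needed concerns the index conventions at $n=1$ and at the truncation edge $n=s$, together with the empty-product conventions at $j=j_G$.
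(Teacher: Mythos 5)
Your proposal is correct, and it takes a genuinely different route from the paper. Both arguments begin with the same key observation -- that the $A^{(k)}$ pairwise commute because they differ by scalar multiples of the identity -- but they exploit it differently. The paper uses commutativity to justify a \emph{simultaneous diagonalization} $A^{(\ell)} = S\, D^{(\ell)} S^{-1}$, then proves (by two separate inductions) explicit binomial formulas for the entries of $S$ and of $S^{-1}$, writes ${\mathcal M}^{(j)} = S\,\mathrm{diag}(d_1^{(j)},\dots,d_s^{(j)})\,S^{-1}$, and extracts the column sums through a final binomial-sum evaluation. You instead use commutativity to move the new factor to the right, so that the all-ones left-multiplication closes into the scalar recursion ${\mathcal R}^{(j+1)}[n] = 2(2j+n)\,{\mathcal R}^{(j)}[n] - (n-1)\,{\mathcal R}^{(j)}[n-1]$, and you verify the closed form against it by a single induction on $j$ via the Pascal-type identity $\binom{n-1}{k}(n-1-k)=(n-1)\binom{n-2}{k}$. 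Your base case (empty products giving ${\mathcal R}^{(j_G)}[n]=1$) and your remark that the $s\times s$ truncation introduces no boundary effect in columns $1,\dots,s$ are both accurate, and you correctly identify that without commutativity the recursion for the column sums would not be self-contained. What your argument buys is brevity and elementarity: it dispenses entirely with the eigenvector matrix and its inverse. What the paper's argument buys is the explicit spectral data ($S$, $S^{-1}$, and the diagonal factors), which is structural information the authors reuse conceptually (for instance in their discussion of the $\nu \ne 2$ case, where the analogous factors are only individually diagonalizable and are interlaced by a Toeplitz operator); your verification-style proof establishes the formula once guessed but does not derive it from the operator structure.
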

\begin{proof} The proof is based of 2 lemmas.
\begin{lemma} \label{lem:commute_A}
The $s \times s$ matrices $A^{(j)}$ and $A^{(k)}$ commute and therefore share a basis of eigenvectors.
\end{lemma}
\begin{proof}
From 
Equation \eqref{eq:Matrix_A}, one may write that $A^{(j)} = D_j + N$ where $D_j$ is a diagonal matrix and  $N$ is a matrix with non-zero entries $N[i, i+1] = -i$. From this it is immediate that the commutator
\begin{eqnarray*}
\left[A^{(j)}, A^{(k)}
\right] &=& \left[N, D_k - D_j \right]\\
&=& 4(k-j) \left[ N, \mathbb{I}\right] = 0.
\end{eqnarray*}
\end{proof}

\begin{lemma} \label{lem:a_matrix}
Each $s \times s$ matrix $A^{(\ell)}$ may be written as
\[
A^{(\ell)} = S \cdot D^{(\ell)} \cdot S^{-1}
\]
where the diagonal entries of $D^{(\ell)}$ are equal to $2(2 \ell + k)$, for $k=1, \cdots, s$. The matrix $S$ (whose columns are the common eigenvectors of the $A^{(\ell)}$, for all $\ell$) may be chosen to be an upper unipotent matrix with entries
\begin{equation} \label{eq:s_entries}
S[k-m,k] = (-1)^m \left(\frac12\right)^m 
{k-1 \choose m}, \qquad 0 \le m < k, \quad 1 \le k \le s.
\end{equation}
The entries of the inverse, $S^{-1}$ are given by
\begin{equation} \label{eq:s_inv_entries}
S^{-1}[k-m,k] = \Big\vert S[k-m,k] \Big\vert = \left(\frac12\right)^m
{k-1 \choose m}, \qquad 0 \le m < k, \quad 1 \le k \le s.
\end{equation}
\end{lemma}
\begin{proof}
Since, by Lemma \ref{lem:commute_A}, the eigenvectors of the $A^{(\ell)}$ are common, to find $S$ it suffices to solve
\[
A^{(0)} = S \cdot D^{(0)} \cdot S^{-1}.
\]
For the $k^{th}$ eigenvector, $S[\cdot,k]$, one must have 
\begin{equation*}
    A^{(0)}S[\cdot,k] = 2k S[\cdot,k]
\end{equation*}
which from the 2-step structure of $A^{(0)}$ amounts to the system (for $1 \le m <k$)
\begin{eqnarray} \label{2step1}
2k S[k,k] &=& 2k S[k,k]\\ \label{2step2}
2(k-m) S[k-m,k] - (k-m)  S[k-m+1,k]
&=& 2k S[k-m,k]. 
\end{eqnarray}
From \eqref{2step1} it follows that one may take $S[k,k]=1$ so that $S$ will be unipotent. Equation \eqref{2step2} may be rewritten as the fundamental relation
\begin{equation} \label{2StepFun}
    S[k-m,k] = -\dfrac{(k-m)}{2m} S[k-(m-1),k], \qquad 1 \le m < k.
\end{equation}
The first statement of the lemma may now be proved by induction on $m$, with the base step (for $m = 1$) stemming from the choice $S[k,k]=1$:
\[
S[k-1,k]=-\dfrac{(k-1)}{2} S[k,k] = (-1)^1 \left(\frac12\right)^1 
{k-1 \choose 1}.
\]
For the induction step suppose one knows that the lemma holds for $m$. Applying this to \eqref{2StepFun} yields
\begin{eqnarray*}
    S[k-(m+1),k] &=& -\dfrac{(k-(m+1))}{2(m+1)} S[k-m,k]\\
    &=& -\dfrac{(k-(m+1))}{2(m+1)} (-1)^m \left(\frac12\right)^m 
{k-1 \choose m}\\
&=& (-1)^{m+1} \left(\frac12\right)^{m+1} \dfrac{(k-m-1)}{m+1} \dfrac{(k-1)!}{m! (k - 1 - m)!}\\
&=& (-1)^{m+1} \left(\frac12\right)^{m+1}  \dfrac{(k-1)!}{(m+1)! (k - 1 - (m+1))!}\\
&=& (-1)^{m+1} \left(\frac12\right)^{m+1}  {k-1 \choose m+1}.
\end{eqnarray*}
Finally, the second statement of the lemma follows from the first but is a bit more involved. It is proved by induction on the size $s$ of $S$. For this reason, it will be convenient to think of $S$ as the $s \times s$ truncation of a semi-infinite upper unipotent matrix whose columns are defined by Equation \eqref{eq:s_entries}, for $k \ge 1$. For $s=2$, one has from the prescriptions given in the statements of the lemma  that
\begin{equation*}
    S\cdot S^{-1} = \left(\begin{array}{cc}
        1 & -1/2 \\
        0 & 1
    \end{array}\right)
    \left(\begin{array}{cc}
        1 & 1/2 \\
        0 & 1
    \end{array}\right) = \left(\begin{array}{cc}
        1 & 0 \\
        0 & 1
    \end{array}\right).
\end{equation*}
Now assume that the claim about the entries of $S^{-1}$ holds for size $s-1$ and consider the case where $S$ has size $s$. By induction the inverse of the principal $(s-1)\times (s-1)$ sub-matrix of $S$ is the $(s-1)\times (s-1)$ matrix whose entries are $\big|S[m,n]\big|$ for $0 < m,n < s$. So to complete the induction step one just needs to show that contracting the first $s-1$ rows of $S$ with the last column of $S^{-1}$ (as specified in the lemma statement) always gives zero. The first statement of the lemma provides all the information needed to carry out these evaluations.  For $j = 0, \dots, s-2$ we want to show the vanishing of the following contraction:
\begin{eqnarray*}
&&\sum_{m=j}^{s-1} S[j+1,m+1]
\Big\vert S[m+1, s]\Big\vert\\
&=& \sum_{m=j}^{s-1}
\left( (-1)^{m-j} \dfrac{1}{2^{m-j}}{m \choose m-j}\right)
\left(\dfrac{1}{2^{s-m-1}}{s-1 \choose s-m-1}\right)\\
&=& \sum_{m=j}^{s-1} (-1)^{m-j} \dfrac{1}{2^{s-j-1}}
  {m \choose m-j}
 {s-1 \choose s-m-1}\\
 &=& \dfrac{1}{2^{s-j-1}}
 \sum_{m=j}^{s-1} (-1)^{m-j} 
  {m \choose m-j}
 {s-1 \choose s-m-1}\\
 &=& \dfrac{1}{2^{s-j-1}}
 \sum_{m=j}^{s-1} (-1)^{m-j} 
 {m \choose j}{s-1 \choose m}.
\end{eqnarray*}
The last sum may be rewritten as
\begin{equation*}
  \dfrac{1}{j!} \sum_{m=j}^{s-1} (-1)^{m-j}
 {s-1 \choose m}  m (m-1) \cdots (m-j+1).
\end{equation*}
A moment's reflection reveals that this sum may be re-expressed as
\begin{equation*}
  \dfrac{1}{j!} \dfrac{d^j}{dx^j}\sum_{m=0}^{s-1}
 {s-1 \choose m}  x^m \Big|_{x=-1}
 = \dfrac{1}{j!} \dfrac{d^j}{dx^j}
 (1 + x)^{s-1}\Big|_{x=-1}.
\end{equation*}
which clearly vanishes for $j=0,\dots,s-2$. This concludes the proof of Lemma \ref{lem:a_matrix}.
\end{proof}

Therefore,
\begin{align*}
{\mathcal M}^{(j)} & = \prod_{k=j_G}^{j-1} 
A^{(k)} = S \cdot \prod_{k=j_G}^{j-1} D^{(k)} \cdot S^{-1} \\
& = S \cdot {\rm diag} \left(\left[\prod_{k=j_G}^{j-1} 2 (2 k + 1), \prod_{k=j_G}^{j-1} 2 (2 k + 2), \cdots, \prod_{k=j_G}^{j-1} 2 (2 k + s),\right] \right)\cdot S^{-1}\\
& =  S \cdot {\rm diag}\left( \left[d_1^{(j)}, d_2^{(j)}, \cdots, d_s^{(j)}\right]\right) \cdot S^{-1}
\end{align*}

The final step consists in using the structure of the matrix $S$ to calculate the column sums of ${\mathcal M}^{(j)}$, which are the entries of the row vector ${\mathcal R}^{(j)}$. The
$n^{th}$ column vector of $\mathcal {M}^{(j)}$ is
\begin{eqnarray*}
{\mathcal M}^{(j)}[r,n] &=&
\sum_{\ell = r}^s S[r,\ell] d^{(j)}_\ell S^{-1}[\ell,n] \\
& = & \sum_{\ell = r}^s 
\left( (-1)^{\ell-r} \frac1{2^{\ell-r}} {\ell-1 \choose \ell - r}\right) d^{(j)}_\ell 
\left( \frac1{2^{n-\ell}} {n - 1 \choose n - \ell}\right)\\
&=& \frac1{2^{n-r}} \sum_{\ell = r}^s (-1)^{\ell-r} {\ell-1 \choose \ell - r} {n - 1 \choose n - \ell}
d^{(j)}_\ell
\end{eqnarray*}
It follows that the $n^{th}$ entry of ${\mathcal R}^{(j)}$ is
\begin{eqnarray*}
{\mathcal R}^{(j)}[n] &=& 
\sum_{r = 1}^n \frac1{2^{n-r}} \sum_{\ell = r}^s (-1)^{\ell-r} {\ell-1 \choose \ell - r} {n - 1 \choose n - \ell}
d^{(j)}_\ell \\
&=& \sum_{\ell = 1}^n d^{(j)}_\ell  {n-1 \choose n- \ell} \sum_{r = 1}^\ell\frac1{2^{n-r}}  (-1)^{\ell-r} {\ell-1 \choose \ell - r} \\
&=& \sum_{\ell = 1}^n \frac1{2^{n-\ell}} d^{(j)}_\ell  {n-1 \choose n- \ell} \sum_{r = 1}^\ell\frac1{2^{\ell-r}}  (-1)^{\ell-r} {\ell-1 \choose \ell - r} \\
&=& \sum_{\ell = 1}^n \frac1{2^{n-\ell}} d^{(j)}_\ell  {n-1 \choose n- \ell} \sum_{r = 0}^{\ell - 1} \frac1{2^{(\ell - 1) -r}}  (-1)^{(\ell - 1) -r} {\ell-1 \choose (\ell - 1) - r} \\
&=& \sum_{\ell = 1}^n \frac1{2^{n-\ell}} d^{(j)}_\ell  {n-1 \choose n- \ell} \sum_{r = 0}^{\ell - 1} \frac1{2^{r}}  (-1)^{r} {\ell-1 \choose r}\\
&=& \sum_{\ell = 1}^n \frac1{2^{n-\ell}} d^{(j)}_\ell  {n-1 \choose n- \ell} \left( 1 - x \right)^{\ell - 1}\Big|_{x = 1/2}\\
&=& \frac1{2^{n-1}} \sum_{\ell = 1}^n  d^{(j)}_\ell  {n-1 \choose n- \ell} \\
&=& \frac1{2^{n-1}} \sum_{\ell = 0}^{n-1}   {n-1 \choose \ell}
d^{(j)}_{\ell + 1}.
\end{eqnarray*}
This concludes the proof of Theorem \ref{thm:row_vector}.
\end{proof}

The results of this section are summarized in Theorem \ref{thm:gen_counts} of the main text.

\section{Initial vectors of coefficients $V_{z,g}^{(0)}$ for ${\mathcal N}_{4,z}(g,j)$} \label{app:zg}
Each initial vector $V_{z,g}^{(0)}$ below was obtained by finding the partial fraction expansion of $z_g(z_0)/z_0$ in powers of $2-z_0$, using the expressions provided in \cite{bib:elt22b}. The resulting set of non-zero coefficients was padded on the left by $j_0 - j_G = j_0 = \beta - 1 = 2 g - 1$ zeros, to create a vector of length $s = \alpha + \beta = 5 g - 1$.
\begin{align*}
V_{z,2}^{(0)}&=\left[0, 0, 0, -14, \dfrac{700}{9}, - \dfrac{1540}{9}, \dfrac{560}{3}, - \dfrac{910}{9}, \dfrac{196}{9}\right]^T\\
V_{z,3}^{(0)}&= \left[0, 0, 0, 0, 0, \dfrac{10796}{9}, -\dfrac{297416}{27}, \dfrac{1182748}{27}, -\dfrac{888160}{9}, \dfrac{3723580}{27}, - \dfrac{
3308648}{27}, \dfrac{608972}{9}, \right. \\
& \qquad \left. - \dfrac{573488}{27}, \dfrac{78400}
{27} \right]^T
\\
V_{z,4}^{(0)}&= \left[0, 0, 0, 0, 0, 0, 0, -\dfrac{18696694}{81}, \dfrac{26661644}{9}
, - \dfrac{1387088600}{81}, \dfrac{4772537840}{81}, - \dfrac{
10869779620}{81}, \right.\\
& \qquad \dfrac{17220205688}{81}, - \dfrac{19375553512}{81}
, \dfrac{15491885600}{81}, - \dfrac{8630131990}{81}, \dfrac{
1063795540}{27}, - \dfrac{705318544}{81}, \\
& \qquad \left. \dfrac{70598416}{81}\right
]^T
\\
V_{z,5}^{(0)}&= \left[0, 0, 0, 0, 0, 0, 0, 0, 0, \dfrac{709788436}{9}, - \dfrac{
105371101864}{81}, \dfrac{800494094596}{81}, - \dfrac{412819350944}{9
}, \right. \\
& \qquad \dfrac{11778382939400}{81}, - \dfrac{26995581694928}{81}, \dfrac{
5128257594152}{9}, - \dfrac{2215723834016}{3}, 729617439012, \\
& \qquad - \dfrac{
4921613081000}{9}, \dfrac{24801764653204}{81}, - \dfrac{
10063123626304}{81}, \dfrac{310842049504}{9}, - \frac{477090039776}{
81}, \\
& \left. \qquad \dfrac{37662732800}{81}\right]^T
 \\
V_{z,6}^{(0)}&= \left[0, 0, 0, 0, 0, 0, 0, 0, 0, 0, 0, - \dfrac{10225685162212}{243}, 
\dfrac{618460530794968}{729}, - \dfrac{5825738824972472}{729}, \right. \\
& \qquad \dfrac
{34075590428415200}{729}, - \dfrac{138689846516551540}{729}, \dfrac{
417032295219846488}{729}, \\
& \qquad - \dfrac{959737294012659344}{729}, \dfrac{
191919774489981824}{81}, - \dfrac{820677231516099100}{243}, \\
& \qquad \dfrac{
932026632424847960}{243}, - \dfrac{2530726899989856568}{729}, \dfrac{
1815557475898877984}{729}, \\
& \qquad - \dfrac{1019969072098037516}{729}, \dfrac
{439393492396822600}{729}, - \dfrac{140196204075971840}{729}, \\
& \qquad \left. \dfrac{
31224729704537216}{729}, - \dfrac{1444996352993536}{243}, \dfrac{
31389368571008}{81}\right]^T\\
V_{z,7}^{(0)}&= \left[0, 0, 0, 0, 0, 0, 0, 0, 0, 0, 0, 0, 0, \dfrac{70696890174658568}
{2187}, - \dfrac{1684587188506923920}{2187}, \right. \\
& \qquad \dfrac{
6315201936216281224}{729}, - \dfrac{133780627204397704256}{2187}, 
\dfrac{665498065031354080456}{2187}, \\
& \qquad - \dfrac{2480051038872504782384}{
2187}, \dfrac{7186439203612748889880}{2187}, - \dfrac{
16585967336330196209696}{2187}, \\
& \qquad \dfrac{30973939081698653980888}{2187}
, - \dfrac{15759088207491625771216}{729}, \dfrac{
59316651347289200585864}{2187}, \\
& \qquad - \dfrac{61294373220356485664000}{2187
}, \dfrac{52084247212114918540376}{2187}, - \dfrac{
36202722253041172445008}{2187}, \\
& \qquad \dfrac{20386035169723888009928}{2187}
, - \dfrac{9158234740399344242848}{2187}, \dfrac{
3205807819986945943040}{2187}, \\
& \qquad - \dfrac{842821687710905844352}{2187}, \dfrac{156579118841594853376}{2187}, - \dfrac{6110092774214438912}{
729},\\
& \left. \qquad \dfrac{1017070902906060800}{2187} \right]^T
\end{align*}

\section{Initial vectors of coefficients $V_{e,g}^{(1)}$ for ${\mathcal N}_{4,e}(g,j)$} \label{app:eg}
Each initial vector $V_{e,g}^{(1)}$ below was obtained by finding the partial fraction expansion of $\dfrac{-1}{z_0^3} \dfrac{d e_g}{d t}$ in powers of $2-z_0$, using the expressions provided in \cite{bib:elt22b}. The resulting set of non-zero coefficients was padded on the left by $j_0 - j_G = j_0 - 1 = \beta - 2 = 2 g - 3$ zeros, to create a vector of length $s = \alpha + \beta = 5 g - 5$.
\begin{align*}
V_{e,2}^{(1)}&=\left[0, - \dfrac{13}{3}, 18, -23, \dfrac{28}{3}\right]^T\\
V_{e,3}^{(1)}&= \left[0, 0, 0, \dfrac{5482}{27}, - \dfrac{14620}{9}, \dfrac{15580}{3}, - \dfrac{231920}{27}, \dfrac{70270}{9}, - 3716, \dfrac{19600}{27}
\right]^T\\
V_{e,4}^{(1)}&= \left[0, 0, 0, 0, 0, - \dfrac{724697}{27}, 315602, - \dfrac{43334935}{
27}, 4659228, - \dfrac{76879901}{9}, \dfrac{277757942}{27}, - \frac{
73403603}{9}, \right. \\
& \qquad \left. \dfrac{37007360}{9}, - \dfrac{32343136}{27}, \dfrac{
4152848}{27}\right]^T
\\
V_{e,5}^{(1)}&= \left[0, 0, 0, 0, 0, 0, 0, \dfrac{566095186}{81}, -108169204, \dfrac{
60942443960}{81}, - \dfrac{28077019600}{9}, 8609216140, \right.\\
& \qquad - \dfrac{
450631849304}{27}, \dfrac{630238738696}{27}, -23759139360, \dfrac{
472123187750}{27}, - \dfrac{735685914340}{81}, \\
& \qquad \left. 3162718496, - \dfrac{53730218864}{81}, \dfrac{1711942400}{27}\right]^T
\\
V_{e,6}^{(1)}&= \left[0, 0, 0, 0, 0, 0, 0, 0, 0, - \dfrac{732368409218}{243}, \dfrac{
4681900125716}{81}, - \dfrac{123811531236922}{243}, \right.\\
& \qquad \dfrac{
2000274509112824}{729}, - \dfrac{273020590837748}{27}, \dfrac{
6572679622235672}{243}, - \dfrac{39591433575380588}{729}, \\
& \qquad \dfrac{
750851640690272}{9}, - \dfrac{2671231905442474}{27}, \dfrac{
2447012252991196}{27}, - \dfrac{5155389905557430}{81}, \\
& \qquad \dfrac{
8181175655761048}{243}, - \dfrac{9470356139656016}{729}, \dfrac{
93238615341712}{27}, - \dfrac{137458396716032}{243}, \\
& \left. \qquad \dfrac{
31389368571008}{729}\right]^T\\
V_{e,7}^{(1)}&= \left[0, 0, 0, 0, 0, 0, 0, 0, 0, 0, 0, \dfrac{1416512777883484}{729}
, - \dfrac{1199849052401896}{27}, \dfrac{345596095181486968}{729}, \right.\\
& \qquad - 
\dfrac{762707995529261600}{243}, \dfrac{10547915810649478660}{729}, -
\dfrac{35986409557097447672}{729}, \\
& \qquad \dfrac{10471595757825979504}{81}
, - \dfrac{193855395613589567104}{729}, \dfrac{317685851865765912100}{
729}, \\
& \qquad - \dfrac{418100684741264044520}{729}, \dfrac{147719729846848623464}{243}, - \dfrac{377558000806739867296}{729}, \\
& \qquad \dfrac{85576745657229258068}{243}, - \dfrac{137484666405365276200}{729}, \dfrac{56726601141030011360}{729}, \\
& \qquad - \dfrac{644643067895245952}{27}, \dfrac{3740709382598685952}{729}, - \dfrac{502622738156253568}{729}, \\
& \qquad \left. \dfrac{31783465715814400}{729}\right]^T
\end{align*}


\end{document}